\newcommand{\df}{\buildrel\mathrm{def}\over=}
\newcommand{\RR}{\mathrm R}
\newcommand{\LL}{\mathrm L}
\newcommand{\eps}{\varepsilon}
\newcommand{\half}{\tfrac12}
\newcommand{\eq}[2]{\begin{equation}\label{#1} #2 \end{equation}}
\newtheorem{Le}{Lemma}[section]
\newtheorem{Prop}[Le]{Proposition}
\newtheorem{Th}[Le]{Theorem}
\theoremstyle{definition}
\newtheorem{Def}[Le]{Definition}
\newtheorem{Rem}[Le]{Remark}
\numberwithin{equation}{section}
\newcommand{\ti}[1]{_{\scriptscriptstyle\text{\rm #1}}}
\newcommand{\ut}[1]{^{\scriptscriptstyle\text{\rm #1}}}
\newcommand{\Oe}{\Omega_\eps}
\newcommand{\Om}[1]{\Omega\ut{#1}_\eps}
\newcommand\mytitle{Some extremal problems for martingale transforms. II}
\newcommand\mytitleshort{Extremal problems for martingale transforms}
\begin{document}
\title[\mytitleshort]{\mytitle}

\author{V.~Vasyunin}

\subjclass[2020]{Primary 42B35, 60G46}
\keywords{Bellman function, martingale transform, diagonally concave function}

\begin{abstract}
This paper is a direct continuation of the paper~\cite{First}. 
By this reason neither introductory part of the paper nor the 
list of references are not duplicated. However for the reader 
convenience, the formulas from the first paper that are cited 
here are collected in a special addendum at the end of the 
paper with their original numbers.

At this paper two new local foliations are investigated:
minor pockets and rectangles. The appearance of such local 
foliations is illustrated by the further investigation of 
the examples with the boundary functions being the polynomials 
of the third degree.
\end{abstract}

\thanks{This work is supported by the Russian Science 
Foundation, grant number 19-71-10023 
(https://rscf.ru/project/19-71-10023/).}

\maketitle

\setcounter{section}{7}
\setcounter{figure}{16}

\section{Minor pockets}

In this section we consider the horizontal herringbones with 
a spine going from a point on a boundary line to another 
point on the same line and not intersecting the middle 
line of the strip. We will refer to such foliation as a 
{\it minor pocket} to separate them from {\it major pockets}. 
In a case of a major pocket the spine of a herringbone 
also starts and ends on the same boundary line but intersects 
the middle line of the strip twice. Such pockets have 
completely different origin and they will be considered later.

Minor pockets appear inside the domains with the simple 
foliation. Due to the symmetry we can consider only the 
domains with the right simple foliation and the pockets that 
appear on the upper boundary. Any other cases can be obtained 
by means of change the sign of the variables $x_1$ and/or $x_2$.

Let us assume that for $\eps\in[\eps_1,\eps_0)$ we have 
a standard right simple candidate on a domain 
$\Om{R}[u_1,u_2]$. This means that for $u\in[u_1,u_2]$ and 
$\eps\in[\eps_1,\eps_0)$ two conditions
\begin{align}
\label{150501}
2\eps D_+(u,\eps)=f_+'(u+\eps)-f_-'(u-\eps)-
2\eps f_+''(u+\eps)&>0\,;
\\
\label{150502}
2\eps D_-(u,\eps)=f_+'(u+\eps)-f_-'(u-\eps)-
2\eps f_-''(u-\eps)&>0
\end{align}
are fulfilled (see conditions (2.2) in~\cite{First}). 
Moreover, for $\eps=\eps_0$ one of these conditions 
degenerates. We will assume that this is 
condition~\eqref{150501}, i.\,e., there exists a point 
$u_0\in(u_1,u_2)$ such that
\eq{150503}{
D_+(u_0,\eps_0)=0\,,
}
but for all other points $u\in[u_1,u_0)\cup(u_0,u_2]$ 
condition~\eqref{150501} still holds for $\eps=\eps_0$. 
We will assume that this root is not degenerated if the 
following sense: the function $\eps\mapsto D_+(u_0,\eps)$ 
has a simple root at the point $\eps=\eps_0$, i.\,e.,
\eq{150505}{
\frac{\partial D_+}{\partial x_2}(u_0,\eps_0)
=\frac{f_-''(u_0-\eps_0)-f_+''(u_0+\eps_0)}{2\eps_0}-
f_+'''(u_0+\eps_0)<0\,,
}
and the function $u\mapsto D_+(u,\eps_0)$ has a root of 
the second order at the point 
$u=u_0$, i.\,e.,
\eq{150506}{
\frac{\partial D_+}{\partial x_1}(u_0,\eps_0)
=\frac{f_+''(u_0+\eps_0)-f_-''(u_0-\eps_0)}{2\eps_0}-
f_+'''(u_0+\eps_0)=0\,,
}
and
\eq{150507}{
\frac{\partial^2 D_+}{\partial x_1^2}(u_0,\eps_0)
=\frac{f_+'''(u_0+\eps_0)-f_-'''(u_0-\eps_0)}{2\eps_0}-
f_+''''(u_0+\eps_0)>0\,.
}
Let us mention that condition~\eqref{150506} allows us to 
rewrite~\eqref{150505} in the following simpler form:
\eq{120301}{
f_+'''(u_0+\eps_0)>0\,.
}

Under these assumptions the equation $D_+(u,\eps)=0$ has 
two roots $u^l(\eps)$ and $u^r(\eps)$ ($u^l<u^r$) for every 
$\eps$ in some right neighborhood of $\eps_0$. Writing $u_+$ 
in some assertion, we will mean that this assertion is true 
for each of these two roots, for example, $D_+(u_+,\eps)=0$. 
We assume that $\eps$ is sufficiently close to $\eps_0$, so 
that $D_+(u,\eps)>0$ for $u\in[u_1,u^l)\cup(u^r,u_2]$ and 
$D_+(u,\eps)<0$ for $u\in(u^l,u^r)$. All these assumptions 
mean that for $\eps$ sufficiently close to $\eps_0$ the 
function $u\mapsto D_+(u,\eps)$ decreases at the point $u^l$ 
and increases at the point $u^r$. Therefore,
$(D_+)'_u(u^l)<0$ and $(D_+)'_u(u^r)>0$, i.\,e.,
\eq{160501}{
f_+''(u^l+\eps)-f_-''(u^l-\eps)-2\eps f_+'''(u^l+\eps)<0
}
and
\eq{160502}{
f_+''(u^r+\eps)-f_-''(u^r-\eps)-2\eps f_+'''(u^r+\eps)>0\,.
}

We will assume that $\eps$ is so close to $\eps_0$ that we 
have not only~\eqref{120301}, but
\eq{160503}{
f_+'''(u_++\eps)>0
}
as well.

We need some new notation for minor pockets. The pockets 
appearing on the upper boundary we mark by the index ``$+$'' 
and the index ``$-$'' will be used for the pockets on the lower boundary. If it is a right 
herringbone, then it will be marked by the letter $\mathrm E$ (east),
for a left herringbone will be used the letter $\mathrm W$ (west).

Now we are ready to state the main assertion of this section.

\begin{Th}
\label{160504}
Let the boundary functions $f_\pm$ possess the properties described above\textup, i\,.e.\textup,
\begin{itemize}
\item $D_+(u,\eps)>0$ and $D_-(u,\eps)>0$ for $u\in[u_1,u_2]$ and $\eps\in(\eps_-,\eps_0),$ 
and therefore we are able to construct the right simple foliation on the domain $\Om{R}(u_1,u_2)$\textup;
\item $\eps_0$ is a simple root of the function $\eps\mapsto D_+(u_0,\eps)$ and $D_+(u,\eps_0)>0$
for $u\in[u_1,u_2]\setminus\{u_0\}$\textup;
\item $D_-(u,\eps_0)>0$ for $u\in[u_1,u_2]$\textup;
\item $u_0$ is a second order root of the function $u\mapsto D_+(u,\eps_0)$.
\end{itemize}
Then there exist two roots $u^l$ and $u^r$ $(u^l<u^r)$ of the equation $D_+(u,\eps)=0$
for $\eps\in(\eps_0,\eps_+)$ with some $\eps_+,$ and we can construct 
the following foliation \textup{(see~Fig.~\ref{101201})}
\eq{160505}{
\Om{R}(u_1,u^l)\cup\Om{W+}(u^l,u^r)\cup\Om{R}(u^r,u_2)\,,
}
where the foliation of the domain
$$
\Om{W+}(u^l,u^r)\df\big\{(x_1,x_2)\colon u^l\le x_1-x_2\le u^r,\;-\eps\le x_2\le\eps\big\}
$$
is a left horizontal herringbone extending from the point $(u^l+\eps,\,\eps)$ till 
the point $(u^r+\eps,\,\eps)$.
\end{Th}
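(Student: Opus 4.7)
The plan proceeds in three stages: (i) produce the two branches $u^l(\eps),u^r(\eps)$ and establish their qualitative behavior from the non-degeneracy hypotheses~\eqref{150505}--\eqref{150507}; (ii) assemble the candidate foliation from two simple right pieces and a left horizontal herringbone on the pocket; (iii) check that the pieces match smoothly and that the herringbone itself is admissible in the sense of~\cite{First}.

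For step (i) I would apply the implicit function theorem to $D_+(u,\eps)=0$ at $(u_0,\eps_0)$. Conditions~\eqref{150506} and~\eqref{150507} say that $u\mapsto D_+(u,\eps_0)$ has a non-degenerate minimum equal to $0$ at $u_0$, while~\eqref{150505} says $\partial_\eps D_+(u_0,\eps_0)<0$. Hence for $\eps$ slightly above $\eps_0$ the zero set of $D_+(\cdot,\eps)$ splits into two smooth branches $u^l(\eps)<u_0<u^r(\eps)$ which coalesce as $\eps\downarrow\eps_0$. The sign information~\eqref{160501}--\eqref{160502} then follows by differentiating along these branches, and continuity together with the third bullet of the theorem yields $D_+>0$ outside $[u^l,u^r]$, $D_+<0$ inside, and $D_->0$ throughout, provided $\eps_+$ is chosen small enough.

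For step (ii), on the outer strips $\Om{R}(u_1,u^l)$ and $\Om{R}(u^r,u_2)$ the right simple foliation is admissible by the signs of $D_\pm$ just recorded. On the pocket I would construct the spine as the upper boundary segment from $(u^l+\eps,\eps)$ to $(u^r+\eps,\eps)$ and then, from each spine point, drop the left extremal line prescribed by the herringbone machinery of~\cite{First}. The vanishing of $D_+$ at the endpoints $u^l,u^r$ is precisely the tangency condition making the outer simple characteristics coincide with the outermost herringbone ribs, so the two families of characteristics glue continuously at the slanted lines $x_1-x_2=u^l$ and $x_1-x_2=u^r$. Condition~\eqref{160503} together with~\eqref{150507} makes the spine convex with respect to the family of ribs, so the ribs foliate $\Om{W+}(u^l,u^r)$ without self-intersection, while $D_-(u,\eps_0)>0$ prevents them from touching the lower boundary.

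The principal difficulty is step (iii): checking that the resulting piecewise candidate is $C^1$ across both junction lines and diagonally concave on the pocket. Continuity of the tangent direction across each junction is exactly $D_+(u^\pm,\eps)=0$; matching of the second-order data reduces to an ODE along the spine, which I would derive by differentiating the explicit herringbone formulas of~\cite{First} and integrate using the free function available along the spine. Diagonal concavity on the pocket reduces, as in the earlier analysis, to the positivity of certain second derivatives of the boundary functions; here~\eqref{160503} supplies the required sign on the upper half while $D_-(u,\eps_0)>0$ keeps the lower half bounded away from degeneracy when $\eps_+-\eps_0$ is small. With matching and concavity both established, the foliation~\eqref{160505} is an admissible extension of the original simple candidate to $\eps\in(\eps_0,\eps_+)$.
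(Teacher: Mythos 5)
Your step (i) is essentially the paper's setup and is fine: \eqref{150505}--\eqref{150507} plus the implicit function theorem give the two branches $u^l(\eps)<u^r(\eps)$ with $D_+<0$ between them, $D_+>0$ outside, and $D_->0$ throughout for $\eps_+-\eps_0$ small, and the flanking simple right foliations on $\Om{R}(u_1,u^l)$ and $\Om{R}(u^r,u_2)$ then come for free. The genuine gap is in steps (ii)--(iii): you take the spine of the left herringbone to be the \emph{upper boundary segment} from $(u^l+\eps,\eps)$ to $(u^r+\eps,\eps)$ and propose to drop ribs from it, adjusting second-order data via ``a free function available along the spine.'' That is not what the spine is, and there is no such freedom. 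In the paper the spine is the right shift by $\eps$ of an integral curve $\ell_\eps$ of the field (5.3): a curve that leaves the stationary point $U^r=(u^r,\eps)$ along the eigenvector of the Jacobian (5.17) with small positive slope, dips \emph{into} the strip, and exits through $U^l=(u^l,\eps)$. The heart of the proof, entirely absent from your proposal, is the phase-plane analysis that makes this work: the auxiliary curves $X_0$, $X_1$, $X_\infty$ all pass through $U^l$ and $U^r$ and are ordered between them (Lemma~\ref{180501}); since $\kappa^r>0$ (by~\eqref{160502},~\eqref{160503}) the point $U^r$ is a saddle, while $\kappa^l\to-\infty$ as $\eps\downarrow\eps_0$ forces $s=1+\eps\kappa^l<-8$, so $U^l$ is a node and the separatrix actually terminates there; because $\ell_\eps$ starts below $X_1$ and integral curves cross $X_1$ only from the top right to the bottom left, $\ell_\eps$ stays in the region $D_+>0$; and $D_->0$ along it is reduced to $D_-(u_0,\eps_0)>0$, which is derived from~\eqref{150506} and~\eqref{120301}.

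Once the spine is identified as this integral curve, the $C^1$-matching across $x_1-x_2=u^{l,r}$ and the diagonal concavity on $\Om{W+}(u^l,u^r)$ are supplied by the herringbone formulas of~\cite{First}; the spine is forced by the field, not chosen, and with the spine placed on the boundary the ribs do not yield a diagonally concave candidate on the parallelogram at all. So your step (iii) as described cannot be carried out; what must replace it is precisely the classification of the stationary points $U^l$, $U^r$ and the tracking of the separatrix $\ell_\eps$ inside the region where $D_+>0$ and $D_->0$, which is the actual content of the paper's proof.
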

\begin{figure}[h]
    \centering
    \includegraphics[scale = 0.85]{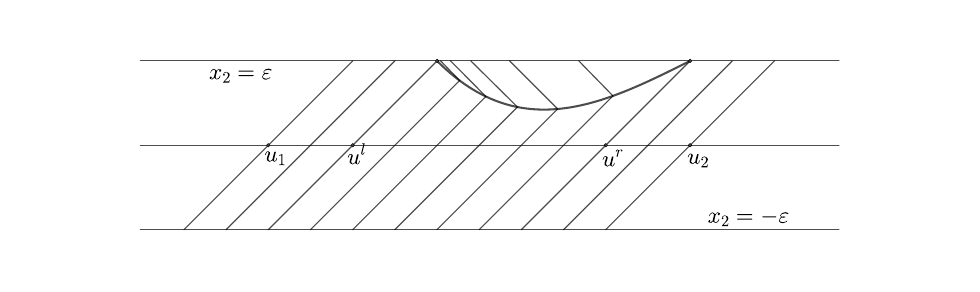}
    \caption{Foliation~\eqref{160505}}
    \label{101201}
\end{figure}

Now we state a symmetrical theorem describing what happens when the function $D_-$ 
vanishes on the upper boundary for some $\eps_0$.

\begin{Th}
\label{120302}
Let the boundary functions $f_\pm$ possess the following properties\textup:
\begin{itemize}
\item $D_+(u,\eps)>0$ and $D_-(u,\eps)>0$ for $u\in[u_1,u_2]$ and $\eps\in(\eps_-,\eps_0),$ 
and therefore we are able to construct the right simple foliation on the domain $\Om{R}(u_1,u_2)$\textup;
\item $\eps_0$ is a simple root of the function $\eps\mapsto D_-(u_0,\eps)$ and $D_-(u,\eps_0)>0$
for $u\in[u_1,u_2]\setminus\{u_0\}$\textup;
\item $D_+(u,\eps_0)>0$ for $u\in[u_1,u_2]$\textup;
\item $u_0$ is a second order root of the function $u\mapsto D_-(u,\eps_0)$.
\end{itemize}
Then there exist two roots $u^l$ and $u^r$ $(u^l<u^r)$ of the equation $D_-(u,\eps)=0$
for $\eps\in(\eps_0,\eps_+)$ with some $\eps_+,$ and we can construct 
the following foliation \textup{(see~Fig.~\ref{111201})}
\eq{120305}{
\Om{R}(u_1,u^l)\cup\Om{E-}(u^l,u^r)\cup\Om{R}(u^r,u_2)\,,
}
where the foliation of the domain
$$
\Om{E-}(u^l,u^r)=\big\{(x_1,x_2)\colon u^l\le x_1-x_2\le u^r,\;-\eps\le x_2\le\eps\big\}
$$
is a right horizontal herringbone extending from the point $(u^r\!-\eps,-\eps)$ till 
$(u^l\!-\eps,-\eps)$.
\end{Th}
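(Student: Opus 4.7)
The plan is to derive Theorem~\ref{120302} from Theorem~\ref{160504} by a symmetry argument, rather than repeat the whole construction. The two statements are strictly parallel: $D_+$ is replaced by $D_-$ throughout, the upper boundary by the lower, and the left ($\mathrm{W}$) horizontal herringbone by the right ($\mathrm{E}$) one. The natural involution of the set-up that realises this parallel is the point reflection $(x_1,x_2)\mapsto(-x_1,-x_2)$ together with the swap of boundary data $\tilde f_\pm(t)\df f_\mp(-t)$; a direct computation from the definitions~\eqref{150501}--\eqref{150502} shows that, with the accompanying substitution $v=-u$, one has $\widetilde D_+(v,\eps)=D_-(-v,\eps)$ and $\widetilde D_-(v,\eps)=D_+(-v,\eps)$. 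Thus the involution exchanges the roles of $D_+$ and $D_-$, sends the critical point $u_0$ to $v_0=-u_0$, and sends the interval $[u_1,u_2]$ to $[-u_2,-u_1]$.

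First I would check termwise that the four hypotheses of Theorem~\ref{120302} translate, after this involution, into the four hypotheses of Theorem~\ref{160504} for the transformed data $\tilde f_\pm$ with critical point $v_0=-u_0$. The non-degeneracy conditions~\eqref{150505}--\eqref{150507} transform correctly because each odd-order derivative acquires a sign under $t\mapsto-t$ that is exactly cancelled by the sign coming from the definition $\tilde f_\pm(t)=f_\mp(-t)$.

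Next I would apply Theorem~\ref{160504} to the transformed data to obtain roots $v^l<v^r$ of $\widetilde D_+(\cdot,\eps)=0$ for $\eps\in(\eps_0,\eps_+)$ together with the foliation consisting of right simple foliations on the flanks and a left herringbone pocket $\widetilde\Om{W+}(v^l,v^r)$ on the upper boundary, with spine running from $(v^l+\eps,\eps)$ to $(v^r+\eps,\eps)$. Pulling this picture back through the involution yields the roots $u^l=-v^r<u^r=-v^l$ of $D_-(u,\eps)=0$, the pocket
\[
\Om{E-}(u^l,u^r)=\{(x_1,x_2)\colon u^l\le x_1-x_2\le u^r,\;|x_2|\le\eps\}
\]
on the lower boundary, and the spine running from $(u^r-\eps,-\eps)$ to $(u^l-\eps,-\eps)$, in agreement with the stated conclusion.

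The main obstacle I anticipate is the bookkeeping of orientations. Since the underlying map is a point reflection, it \emph{a priori} reverses the left/right orientation of the herringbones on the flanks, so one has to check that the combined action of the reflection and the data swap $f_\pm(t)\mapsto f_\mp(-t)$ genuinely preserves the ``right simple foliation'' structure on $\Om{R}(u_1,u^l)$ and $\Om{R}(u^r,u_2)$. This amounts to a direct verification using the definitions in~\cite{First}, and requires no new analytical input beyond what already enters the proof of Theorem~\ref{160504}.
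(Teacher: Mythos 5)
Your proposal is correct and is essentially the paper's own argument: the paper proves only Theorem~\ref{160504} and obtains Theorem~\ref{120302} "by the symmetry", i.e.\ by changing the signs of $x_1$ and $x_2$, which is exactly your point reflection combined with the data swap $\tilde f_\pm(t)=f_\mp(-t)$, under which $\widetilde D_+(v,\eps)=D_-(-v,\eps)$ and the right simple foliation (slope~$+1$ segments) is preserved. Your termwise translation of hypotheses and of the pocket/spine endpoints matches the stated conclusion, so no further input is needed.
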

\begin{figure}[h]

    \centering
    \includegraphics[scale = 0.85]{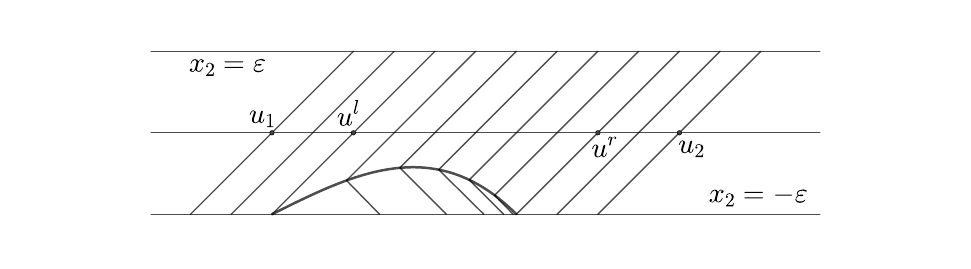}
    \caption{Foliation~\eqref{120305}}
    \label{111201}
\end{figure}

If we start with the left simple foliation, we obtain the following analogous propositions.

\begin{Th}
\label{120303}
Let the boundary functions $f_\pm$ possess the following properties\textup:
\begin{itemize}
\item $D_+(u,-\eps)>0$ and $D_-(u,-\eps)>0$ for $u\in[u_1,u_2]$ and $\eps\in(\eps_-,\eps_0),$ 
and therefore we are able to construct the left simple foliation on the domain $\Om{L}(u_1,u_2)$\textup;
\item $\eps_0$ is a simple root of the function $\eps\mapsto\! D_+(u_0,\!-\eps)$ and 
\hbox{$D_+(u,\!-\eps_0)>0$} for $u\in[u_1,u_2]\setminus\{u_0\}$\textup;
\item $D_-(u,-\eps_0)>0$ for $u\in[u_1,u_2]$\textup;
\item $u_0$ is a second order root of the function $u\mapsto D_+(u,-\eps_0)$.
\end{itemize}
Then there exist two roots $u^l$ and $u^r$ $(u^l\!<\!u^r)$ of the equation 
\hbox{$D_+(u,\!-\eps)=0$} for $\eps\in(\eps_0,\eps_+)$ with some $\eps_+,$ and we can 
construct the following foliation \textup{(see~Fig.~\ref{121201})}
\eq{120306}{
\Om{L}(u_1,u^l)\cup\Om{E+}(u^l,u^r)\cup\Om{L}(u^r,u_2)\,,
}
where the foliation of the domain
$$
\Om{E+}(u^l,u^r)=\big\{(x_1,x_2)\colon u^l\le x_1+x_2\le u^r,\;-\eps\le x_2\le\eps\big\}
$$
is a right horizontal herringbone extending from the point $(u^r-\eps,\,\eps)$ till 
$(u^l-\eps,\,\eps)$.
\end{Th}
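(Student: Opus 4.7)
The plan is to proceed in parallel with the proof of Theorem~\ref{160504}, since Theorem~\ref{120303} is the image of Theorem~\ref{160504} under the natural involution of the problem that exchanges left and right simple foliations on the strip $\{|x_2|\le\eps\}$. Operationally this involution replaces $\eps$ by $-\eps$ in the characterizing function $D_+$ and reorients the spine of the resulting herringbone; once it is checked that this involution carries the hypotheses of Theorem~\ref{160504} to those stated here and a pocket of type $\mathrm{W+}$ into one of type $\mathrm{E+}$, the conclusion follows. For a reader who prefers a direct argument, the proof is a transcription of the one for Theorem~\ref{160504} with $\eps$ replaced by $-\eps$ in every occurrence of $D_+$ and with the orientation of the herringbone flipped accordingly.

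The first substantive step is to produce the two roots $u^l(\eps)<u^r(\eps)$. Under the hypotheses, the analogues of~\eqref{150505} and~\eqref{150507} show that near $(u_0,-\eps_0)$ the function $D_+$ has a non-degenerate local form
\[
D_+(u,-\eps)=c_1(u-u_0)^2-c_2(\eps-\eps_0)+\text{higher order},\qquad c_1,c_2>0,
\]
so for $\eps>\eps_0$ sufficiently close to $\eps_0$ the level set $\{D_+(u,-\eps)=0\}$ consists of two smooth branches $u^l(\eps)<u_0<u^r(\eps)$ which collapse to $u_0$ as $\eps\to\eps_0^+$. Off $[u^l,u^r]$ the function $D_+(u,-\eps)$ remains positive, so the left simple foliation is available on $\Om{L}(u_1,u^l)$ and $\Om{L}(u^r,u_2)$, exactly as in Theorem~\ref{160504}. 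The sign information analogous to~\eqref{160501}--\eqref{160503} also follows automatically from this Morse-type picture.

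Inside the pocket $\Om{E+}(u^l,u^r)$ one then installs a right horizontal herringbone whose spine is the segment of the upper boundary $\{x_2=\eps\}$ joining $(u^r-\eps,\eps)$ to $(u^l-\eps,\eps)$; the Bellman candidate is produced by transporting $f_+$ along the characteristics of this herringbone following the general recipe recalled in the addendum. The main obstacle, exactly as for Theorem~\ref{160504}, is to verify that the resulting candidate is $C^1$ across the two interfaces $x_1+x_2=u^l$ and $x_1+x_2=u^r$ and diagonally concave on the pocket itself. The $C^1$-matching will reduce to the vanishing conditions $D_+(u^l,-\eps)=D_+(u^r,-\eps)=0$, while the diagonal concavity will follow from the adapted analogues of~\eqref{160501}--\eqref{160503}; both verifications amount to routine but bookkeeping-intensive computations with the herringbone's parametrization and the one-sided derivatives of the candidate along the interfaces.
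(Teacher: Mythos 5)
Your first paragraph is in fact the paper's own route: the author proves Theorem~\ref{160504} in detail and obtains Theorem~\ref{120303} (and the other two variants) ``by the symmetry,'' namely by changing the sign of $x_1$ and/or $x_2$. But your operational description of the involution is not right as stated: ``replacing $\eps$ by $-\eps$ in $D_+$'' is a formal substitution, not a symmetry of the problem. The correct involution here is the reflection $x_1\mapsto-x_1$ together with $f_\pm(t)\mapsto f_\pm(-t)$, under which $D_\pm(u,\eps)\mapsto D_\pm(-u,-\eps)$, right simple foliations become left ones, field~(5.3) becomes field~\eqref{070302}, and W-herringbones become E-herringbones; one must check (briefly, but explicitly) that this carries every hypothesis of Theorem~\ref{160504} to the hypotheses stated here and the foliation~\eqref{160505} to~\eqref{120306}. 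Without naming the actual reflection, the claim that ``the involution carries the hypotheses over'' has nothing to be checked against.

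Your fallback ``direct'' argument contains a genuine error and, more importantly, omits the substance of the proof. The spine of the E$+$ herringbone is \emph{not} ``the segment of the upper boundary $\{x_2=\eps\}$ joining $(u^r-\eps,\eps)$ to $(u^l-\eps,\eps)$'': those are only its endpoints. The spine is the left shift by $\eps$ of an integral curve $\ell_\eps$ of field~\eqref{070302} (equivalently, the $x_1$-reflection of the curve used for Theorem~\ref{160504}), and this curve dips into the strip between the two stationary points. Establishing that such a curve exists and stays in the region $D_+>0$, $D_->0$ is exactly where the work lies: one needs the Jacobian analysis showing one stationary point is a saddle and the other a node for $\eps$ slightly above $\eps_0$ (eigenvalues of~\eqref{190501}), the curves $X_0$, $X_1$, $X_\infty$ of Lemma~\ref{180501} to trap $\ell_\eps$ below $X_1$ so that $D_+>0$ along it, and the inequality $D_-(u_0,\eps_0)>0$ (deduced from~\eqref{150506} and~\eqref{120301}) to keep the slope of the spine strictly between $-1$ and $1$. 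Your sketch replaces all of this by ``routine but bookkeeping-intensive computations'' for $C^1$-matching and concavity, which is not where the difficulty is and which, with the spine misidentified as a boundary segment, would not produce the foliation~\eqref{120306} at all. Either carry out the reflection argument precisely, or transcribe the dynamical analysis of the proof of Theorem~\ref{160504} for field~\eqref{070302}; as written, neither branch of your proposal is complete.
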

\begin{figure}[h]
    \centering
    \includegraphics[scale = 0.9]{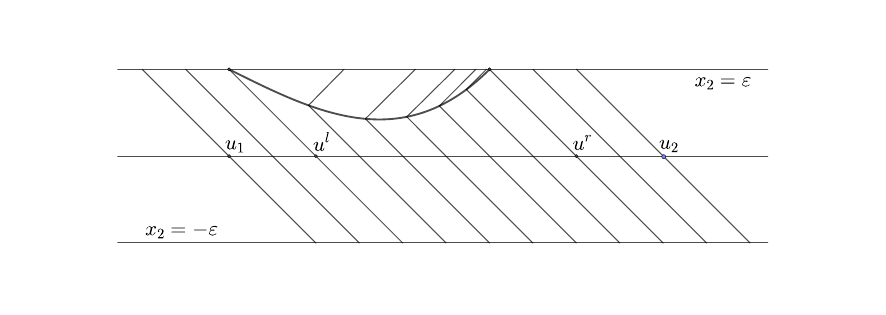}
    \caption{Foliation~\eqref{120306}}
    \label{121201}
\end{figure}

\begin{Th}
\label{120304}
Let the boundary functions $f_\pm$ possess the following properties\textup:
\begin{itemize}
\item $D_+(u,-\eps)>0$ and $D_-(u,-\eps)>0$ for $u\in[u_1,u_2]$ and $\eps\in(\eps_-,\eps_0),$ 
and therefore we are able to construct the left simple foliation on the domain $\Om{L}(u_1,u_2)$\textup;
\item $\eps_0$ is a simple root of the function $\eps\mapsto\! D_-(u_0,\!-\eps)$ and 
\hbox{$D_-(u,\!-\eps_0)>0$} for $u\in[u_1,u_2]\setminus\{u_0\}$\textup;
\item $D_+(u,\eps_0)>0$ for $u\in[u_1,u_2]$\textup;
\item $u_0$ is a second order root of the function $u\mapsto D_-(u,\eps_0)$.
\end{itemize}
Then there exist two roots $u^l$ and $u^r$ $(u^l<u^r)$ of the equation $D_-(u,\eps)=0$
for $\eps\in(\eps_0,\eps_+)$ with some $\eps_+,$ and we can construct 
the following foliation \textup{(see~Fig.~\ref{131201})}
\eq{120307}{
\Om{L}(u_1,u^l)\cup\Om{W-}(u^l,u^r)\cup\Om{L}(u^r,u_2)\,,
}
where the foliation of the domain
$$
\Om{W-}(u^l,u^r)=\big\{(x_1,x_2)\colon u^l\le x_1+x_2\le u^r,\;-\eps\le x_2\le\eps\big\}
$$
is a left horizontal herringbone extending from the point $(u^l+\eps,-\eps)$ till 
$(u^r+\eps,-\eps)$.
\end{Th}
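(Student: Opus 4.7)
The plan is to deduce Theorem~\ref{120304} from Theorem~\ref{160504} by symmetry, in the spirit of the remark at the beginning of this section that confines the detailed analysis to right simple foliations with pockets on the upper boundary. Concretely, I would apply the substitution $(x_1,x_2)\mapsto(-x_1,-x_2)$ together with the relabeling $\tilde f_\pm(x)\df f_\pm(-x)$ of the boundary data. A short direct computation from the definitions of $D_\pm$ gives the reciprocity relation $\tilde D_\pm(u,\eps)=D_\mp(-u,-\eps)$, which is the algebraic key to the symmetry.

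Next I would verify, hypothesis by hypothesis, that under this substitution each assumption of Theorem~\ref{120304} (read consistently with $-\eps$ throughout, as the symmetry demands) is transported to the corresponding assumption of Theorem~\ref{160504}: the positivity of $D_\pm(u,-\eps)$ becomes the positivity of $\tilde D_\pm(\tilde u,\eps)$, so that the left simple foliation on $\Om{L}(u_1,u_2)$ becomes a right simple foliation on $\Om{R}(-u_2,-u_1)$ for the data $\tilde f_\pm$; the simple zero of $\eps\mapsto D_-(u_0,-\eps)$ becomes a simple zero of $\eps\mapsto\tilde D_+(-u_0,\eps)$; the second-order zero in $u$ of $D_-(u,-\eps_0)$ becomes a second-order zero in $\tilde u$ of $\tilde D_+(\tilde u,\eps_0)$; and the strict positivity of $D_+(u,-\eps_0)$ becomes that of $\tilde D_-(\tilde u,\eps_0)$. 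Applying Theorem~\ref{160504} to the transformed data then yields a foliation
$$
\Om{R}(-u_2,\tilde u^l)\cup\Om{W+}(\tilde u^l,\tilde u^r)\cup\Om{R}(\tilde u^r,-u_1)
$$
of the transformed domain, whose pocket is a left horizontal herringbone with spine running from $(\tilde u^l+\eps,\eps)$ to $(\tilde u^r+\eps,\eps)$.

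Pulling back by the reflection produces the claimed decomposition~\eqref{120307}, with $u^l=-\tilde u^r$, $u^r=-\tilde u^l$, and with the spine of the $\mathrm W-$ pocket running from $(u^l+\eps,-\eps)$ to $(u^r+\eps,-\eps)$ along the lower boundary. The main obstacle, as is typical for such symmetry reductions, is not any deep calculation but the orientation bookkeeping: one must verify carefully that the composite $180^\circ$ reflection genuinely exchanges a $\mathrm W+$ herringbone with a $\mathrm W-$ herringbone (and not with an $\mathrm E-$ herringbone), that the spine endpoints land on the correct boundary line, and that the east/west direction of the horizontal axis is preserved rather than reversed. Once this is settled, the entire geometric content is inherited from Theorem~\ref{160504} and no new herringbone construction is required.
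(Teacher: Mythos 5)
Your high-level plan — derive Theorem~\ref{120304} from Theorem~\ref{160504} by a symmetry of the strip, which is exactly what the paper intends ("the other three theorems can be obtained by the symmetry") — is fine, but the specific symmetry you chose does not work, and the identity you base it on is false. First, the substitution and the relabeling are mutually inconsistent: the half-turn $(x_1,x_2)\mapsto(-x_1,-x_2)$ interchanges the two boundary lines, so it forces the swapped data $\tilde f_+(t)=f_-(-t)$, $\tilde f_-(t)=f_+(-t)$; keeping $\tilde f_\pm(t)=f_\pm(-t)$, as you write, corresponds instead to the reflection $(x_1,x_2)\mapsto(-x_1,x_2)$. Second, neither consistent reading gives your ``reciprocity relation'' $\tilde D_\pm(u,\eps)=D_\mp(-u,-\eps)$: computing directly from $2\eps D_+(u,\eps)=f_+'(u+\eps)-f_-'(u-\eps)-2\eps f_+''(u+\eps)$ and its analogue for $D_-$, the relabeling $\tilde f_\pm(t)=f_\pm(-t)$ yields $\tilde D_\pm(u,\eps)=D_\pm(-u,-\eps)$ (no swap of subscripts), while the half-turn data $\tilde f_\pm(t)=f_\mp(-t)$ yields $\tilde D_\pm(u,\eps)=D_\mp(-u,\eps)$ (no sign change in the second argument). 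Consequently the degenerating hypothesis of Theorem~\ref{120304}, a simple zero of $\eps\mapsto D_-(u_0,-\eps)$, is transported in the first reading to a zero of $\tilde D_-$ on the upper boundary (the hypotheses of Theorem~\ref{120302}, pocket $\mathrm E-$), and in the second to a zero of $\tilde D_+$ at $-\eps$ (the hypotheses of Theorem~\ref{120303}, pocket $\mathrm E+$) — in neither case to the hypotheses of Theorem~\ref{160504}, where it is $D_+$ at $+\eps$ that must degenerate. The orientation question you flagged but left ``to be verified'' is precisely where the argument breaks: the half-turn sends the $\mathrm W+$ pocket of Theorem~\ref{160504} to an $\mathrm E-$ pocket, not to the $\mathrm W-$ pocket claimed in~\eqref{120307}.

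The symmetry that actually does the job is the reflection in the midline alone: $(x_1,x_2)\mapsto(x_1,-x_2)$ together with the swap $\tilde f_+\df f_-$, $\tilde f_-\df f_+$ (arguments unchanged). Then $\tilde D_\pm(u,\eps)=D_\mp(u,-\eps)$, so each bullet of Theorem~\ref{120304} becomes verbatim the corresponding bullet of Theorem~\ref{160504} for $\tilde f_\pm$ (same $u_0$, same $[u_1,u_2]$; the simple zero of $\eps\mapsto D_-(u_0,-\eps)$ becomes a simple zero of $\eps\mapsto\tilde D_+(u_0,\eps)$), conditions~(2.2) for the left foliation become conditions~(2.2) for the right one, and $\Om{L}\leftrightarrow\Om{R}$. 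Moreover, this reflection carries the field~(5.3) written for $\tilde f_\pm$ into the field~(5.3) for $f_\pm$ up to time reversal, so a left herringbone remains a left herringbone and the ``shift right by $\eps$'' rule is preserved; the spine from $(u^l+\eps,\eps)$ to $(u^r+\eps,\eps)$ reflects to the spine from $(u^l+\eps,-\eps)$ to $(u^r+\eps,-\eps)$, which is exactly the $\Om{W-}$ pocket of~\eqref{120307}. (Your half-turn is not useless — with the correct swapped data it maps Theorem~\ref{120303} to Theorem~\ref{120304} — but to start from Theorem~\ref{160504} you must use the midline reflection, or equivalently compose the two reflections and track the induced swap of $f_+$ and $f_-$.)
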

\begin{figure}[h]
    \centering
    \includegraphics[scale = 0.95]{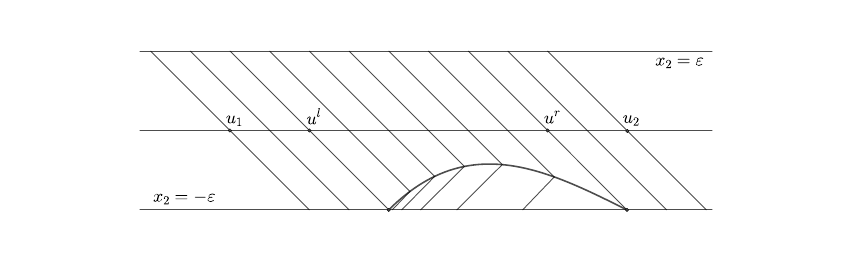}
    \caption{Foliation~\eqref{120307}}
    \label{131201}
\end{figure}

We will prove Theorem~\ref{160504} only. The other three theorems can be obtained by 
the symmetry. 

\begin{proof}[Proof of Theorem~\textup{\ref{160504}}] Let us begin with the investigation of the field~(5.3)  in a small 
domain including the stationary points $U^l\df(u^l,\eps)$ and $U^r\df(u^r,\eps)$. 
We start from consideration of three curves $X_0$, $X_1$, and $X_\infty$. We define $X_0$ as a set of 
points, where the integral curves of our field have the slope equal to $0$ (i.\,e., $\dot x_2=0$). On  $X_1$ the slope is equal to $1$ 
(i.\,e., $\dot x_2=\dot x_1$), and on $X_\infty$ the slope is equal to $\infty$ (i.\,e., $\dot x_1=0$), correspondingly.

Slightly abusing notation we will use the same symbols $X_i$ the following functions:
\begin{align*}
X_0(x_1,x_2)&=f_-'-f_+'-(\eps-x_2)f_-''+(\eps+x_2)f_+''\,;
\\
X_1(x_1,x_2)&=f_+'-f_-'-2x_2f_+''\,;
\\
X_\infty(x_1,x_2)&=\eps\big(f_+'-f_-'\big)-x_2(\eps-x_2)f_-''-x_2(\eps+x_2)f_+''\,.
\end{align*}
According to equation~(5.3) the curves $X_i$ are given by the 
equations $X_i=0$. Recall that the arguments of $f_+$ and all its derivatives are $x_1+x_2$,
the arguments of $f_-$ and all its derivatives are $x_1-x_2$.

\begin{Le}
\label{180501}
All three curves $X_0,$ $X_1,$ and $X_\infty$ pass through the points $U^l$ and $U^r$\!. 
If $\eps$ is sufficiently close to $\eps_0,$ then the mentioned curves are situated between 
these two points in the following order\textup: $X_\infty$ lies below the curve $x_2=\eps$ 
and above $X_1,$ the curve $X_0$ lies below $X_1$.
\end{Le}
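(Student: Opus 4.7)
My plan is to first handle the three curves simultaneously on the upper boundary $x_2=\eps$ of the strip, and then extract the ordering below $x_2=\eps$ from two short algebraic identities relating $X_0,X_1,X_\infty$.

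Direct substitution at $x_2=\eps$ gives
\[
X_0(u,\eps)=-2\eps D_+(u,\eps),\qquad X_1(u,\eps)=2\eps D_+(u,\eps),\qquad X_\infty(u,\eps)=2\eps^2 D_+(u,\eps),
\]
so all three functions vanish exactly at the roots $u^l,u^r$ of $D_+(\cdot,\eps)=0$, which proves the first assertion. On the open segment $u\in(u^l,u^r)$ we have $D_+<0$ and hence $X_0>0$, $X_1<0$, $X_\infty<0$ along $x_2=\eps$. Differentiating once in $x_2$ and specializing to the limit point $(u_0,\eps_0)$ --- where~\eqref{150506} lets one trade the combination $f_+''(u_0+\eps_0)-f_-''(u_0-\eps_0)$ for $2\eps_0 f_+'''(u_0+\eps_0)$ --- one obtains
\[
\partial_{x_2}X_0=2\eps_0 f_+''',\qquad \partial_{x_2}X_1=-4\eps_0 f_+''',\qquad \partial_{x_2}X_\infty=-6\eps_0^2 f_+''',
\]
with the common factor $f_+'''(u_0+\eps_0)$ positive by~\eqref{120301}. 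By continuity the sign pattern persists in a neighborhood, so the implicit function theorem realizes each curve $X_i=0$ locally as a smooth graph $x_2=g_i(x_1)$, and the sign of $X_i$ on $x_2=\eps$ combined with the monotonicity in $x_2$ forces $g_i(x_1)<\eps$ on $(u^l,u^r)$.

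For the ordering I would exploit two identities obtained by direct expansion,
\[
X_0+X_1=(\eps-x_2)\bigl(f_+''(x_1+x_2)-f_-''(x_1-x_2)\bigr),\qquad X_\infty=(\eps+x_2)X_1+x_2X_0,
\]
the first of which, combined with \eqref{150506} read as $f_+''(u_0+\eps_0)-f_-''(u_0-\eps_0)=2\eps_0 f_+'''(u_0+\eps_0)>0$, guarantees $f_+''-f_-''>0$ in a neighborhood. On $X_0=0$ (for $x_2<\eps$) the first identity then yields $X_1=(\eps-x_2)(f_+''-f_-'')>0$; since the region $X_1>0$ lies below the graph $g_1$, we conclude $g_0<g_1$. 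On $X_\infty=0$ the combined form $X_\infty+\eps X_0=(\eps^2-x_2^2)(f_+''-f_-'')$ forces $X_0>0$, whence the relation $(\eps+x_2)X_1+x_2X_0=0$ gives $X_1=-x_2 X_0/(\eps+x_2)<0$; since the region $X_1<0$ lies above $g_1$, we get $g_1<g_\infty$, completing the sandwich $g_0<g_1<g_\infty<\eps$.

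The principal difficulty is uniformity: everything above is transparent exactly at $(u_0,\eps_0)$, but the argument requires that the positivity of $f_+''-f_-''$, the strict monotonicity of each $X_i$ in $x_2$, and the nonvanishing of the relevant partial derivatives all persist throughout a strip $\eps-\delta<x_2\le\eps$, $x_1\in[u^l,u^r]$. Each of these properties holds by continuity from the degenerate point, but it is precisely the size of that neighborhood that shrinks the admissibility interval $(\eps_0,\eps_+)$ and determines $\eps_+$ implicitly.
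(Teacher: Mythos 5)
Your proof is correct, and for the ordering part it takes a genuinely different route from the paper. The first step (the identities $X_i(x_1,\eps)=\pm 2\eps D_+$, $2\eps^2 D_+$ on the boundary) coincides with the paper's. For the mutual position, however, the paper computes the slopes of the three curves at the common stationary points, obtaining $\mathrm{slope}(X_0)=1/(\eps\kappa_+)$, $\mathrm{slope}(X_1)=1/(1+2\eps\kappa_+)$, $\mathrm{slope}(X_\infty)=1/(2+3\eps\kappa_+)$, orders them at $U^r$ using $\kappa^r>0$, and then appeals to the facts that the curves cannot cross between $U^l$ and $U^r$ and are uniformly close to $x_2=\eps$ as $\eps\to\eps_0$. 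You instead use the exact linear relations $X_0+X_1=(\eps-x_2)\bigl(f_+''-f_-''\bigr)$ and $X_\infty=(\eps+x_2)X_1+x_2X_0$ (both of which I checked and which are correct), together with the signs of $\partial_{x_2}X_i$ at $(u_0,\eps_0)$, which you computed correctly as $2\eps_0 f_+'''$, $-4\eps_0 f_+'''$, $-6\eps_0^2 f_+'''$ using~\eqref{150506} and~\eqref{120301}. What your approach buys: it gives the strict pointwise ordering $g_0<g_1<g_\infty<\eps$ along the whole interval directly, and as a by-product it actually \emph{justifies} the paper's unproved assertion that the curves cannot cross (a crossing of $X_0=0$ and $X_1=0$ with $x_2<\eps$ would force $f_+''=f_-''$, excluded near $(u_0,\eps_0)$ where $f_+''-f_-''=2\eps_0 f_+'''>0$). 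What the paper's approach buys: the slope computation in terms of $\kappa_+$ is recycled immediately afterwards in the eigenvalue/eigenvector analysis of the Jacobian, so it integrates better with the rest of the proof of Theorem~\ref{160504}. The uniformity caveat you flag at the end (that the relevant signs must persist on a neighborhood containing the curve segments over $[u^l,u^r]$, which is what restricts to $\eps$ slightly above $\eps_0$) is real but is present at exactly the same level of informality in the paper, which disposes of it with the phrase ``uniformly close to the horizontal line $x_2=\eps$'' and an appeal to the implicit function theorem; your version is no less rigorous on this point.
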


\begin{proof}
The first assertion of the lemma follows from the simple equalities:
\begin{align*}
X_0(x_1,\eps)&=-2\eps D_+(x_1,\eps)\,;
\\
X_1(x_1,\eps)&=2\eps D_+(x_1,\eps)\,;
\\
X_\infty(x_1,\eps)&=2\eps^2 D_+(x_1,\eps)\,.
\end{align*}

To prove the second assertion of the lemma about mutual position of the curves $X_i$ we 
calculate the slopes of these curves at the points $U^l$ and $U^r$. Calculating the 
slope$(X_i)=-\frac{\partial X_i}{\partial x_1}/\frac{\partial X_i}{\partial x_2}$ 
we use the notation from~(5.18):
\eq{180502}{
\kappa_+=\frac{2f_+'''(u_++\eps)}{f_+''(u_++\eps)-f_-''(u_+-\eps)-2\eps f_+'''(u_++\eps)}\,.
}
Since we have here two roots $u^l$ and $u^r$ of the equation $D_+(u,\eps)=0$ instead of 
one root $u_+$ in~(5.18), the corresponding values of $\kappa_+$ will be
denoted $\kappa^l$ and $\kappa^r$. Note that according to~\eqref{160501}--\eqref{160503} 
we have $\kappa^l<0$ and $\kappa^r>0$. Moreover, $\kappa^l\to-\infty$ and $\kappa^r\to+\infty$ 
as $\eps\to\eps_0$ from above. Indeed, in this situation $u^{l,r}\to u_0$, and therefore, denominator 
in~\eqref{180502} tends to 0.

By direct calculations we get (recall that $u_+$ is any of roots $u^{l,r}$, and then 
naturally $\kappa_+$ is the corresponding $\kappa^{l,r}$):
$$
\begin{aligned}
\text{slope}(X_0)(u_+,\eps)&=\frac{f_+''(u_+\!+\eps)-f_-''(u_+\!-\eps)-2\eps f_+'''(u_+\!+\eps)}
{2\eps f_+'''(u_+\!+\eps)}=\frac1{\eps\kappa_+}\,;
\\
\text{slope}(X_1)(u_+,\eps)&=\frac{f_+''(u_+\!+\eps)-f_-''(u_+\!-\eps)-2\eps f_+'''(u_+\!+\eps)}
{f_+''(u_+\!+\eps)-f_-''(u_+\!-\eps)+2\eps f_+'''(u_+\!+\eps)}=\frac1{1+2\eps\kappa_+}\,;
\rule{0pt}{20pt}
\\
\text{slope}(X_\infty)(u_+,\eps)&=\frac{f_+''(u_+\!+\eps)-f_-''(u_+\!-\eps)-
2\eps f_+'''(u_+\!+\eps)}{2f_+''(u_+\!+\eps)-2f_-''(u_+\!-\eps)+2\eps f_+'''(u_+\!+\eps)}
=\frac1{2+3\eps\kappa_+}\,.\rule{0pt}{25pt}
\end{aligned}
$$
Since $\kappa^r>0$, we have $2+3\eps\kappa^r>1+2\eps\kappa^r>\eps\kappa^r>0$, i.\,e., 
at the point $U^r$ we have
$$
0<\text{slope}(X_\infty)(u^r,\eps)<\text{slope}(X_1)(u^r,\eps)<\text{slope}(X_0)(u^r,\eps).
$$
Note that all these curves on the interval between $U^l$ and $U^r$ cannot cross each 
other and are uniformly close to the horizontal line $x_2=\eps$ as $\eps\to\eps_0$. 
The implicit function theorem and condition~\eqref{120301} supply us with the
possibility to speak about mutual positions of these curves for $\eps$ being
slightly bigger than $\eps_0$.
The above chain of the inequalities is sufficient for the fact that the mutual 
position of the considered curves is such as described in the lemma. 
\end{proof}

Mutual position of the curves $X_0$, $X_1$, and $X_\infty$ is illustrated on Fig.~\ref{081202}.

\begin{figure}[h]
    \centering
    \includegraphics[scale = 0.4]{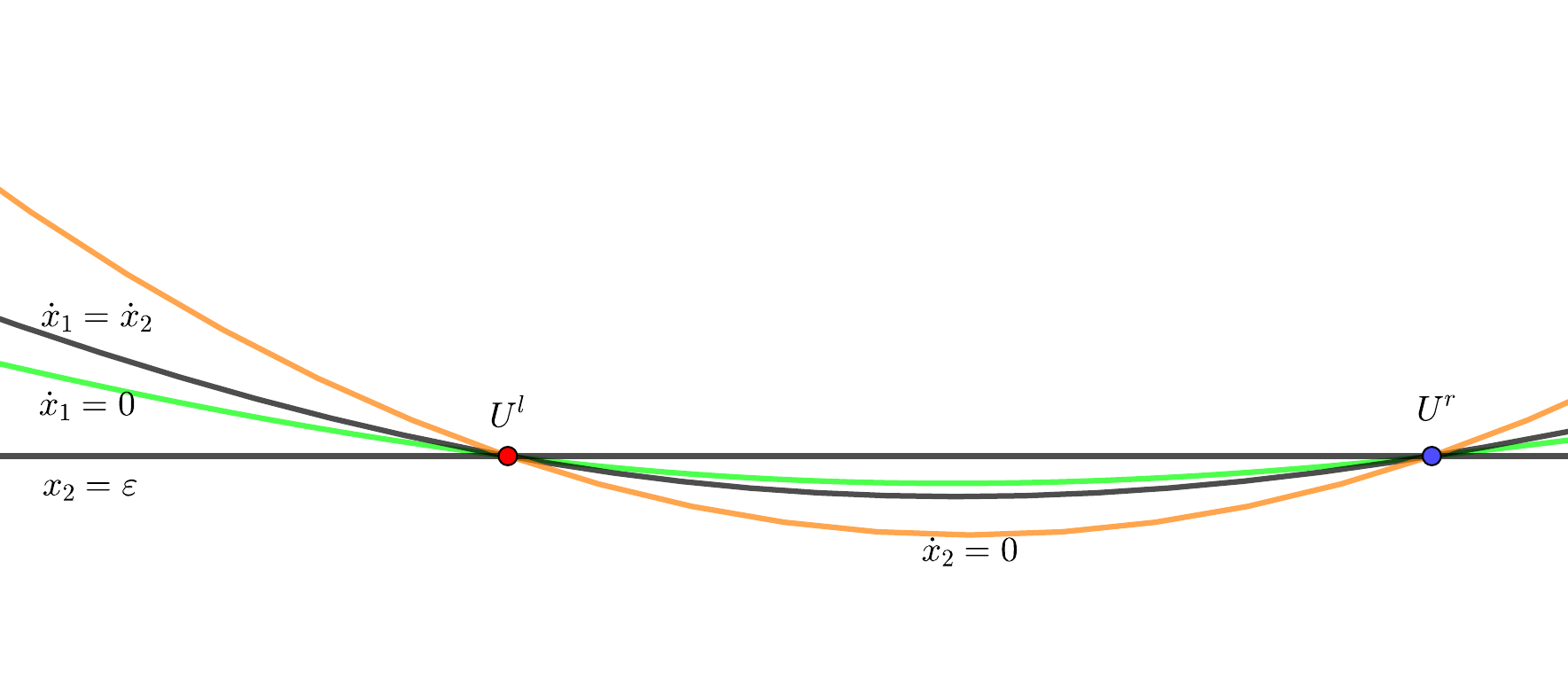}
    \caption{The curves $X_0$, $X_1$, and $X_\infty$ described in Lemma~\ref{180501}}
    \label{081202}
\end{figure}

Now let us consider the field~(5.3) near the stationary points $U^l$ and $U^r$. 
The Jacobian matrix at the points $U^l$ and $U^r$ was calculated in formula~(5.17). 
Let us investigate this matrix in more details for $\eps$ slightly bigger than $\eps_0$. 
It will be a bit more convenient to use a parameter $s=1+\eps\kappa_+$ rather than 
$\kappa_+$. Then the matrix we would like to investigate takes the form
\eq{190501}{
\begin{pmatrix}
1&1-3s
\\
-1&s-1
\end{pmatrix},
}
whose characteristic polynomial is $\lambda^2-s\lambda-2s$ and the eigenvalues are
\eq{190502}{
\lambda=\frac{s\pm\sqrt{s^2+8s}}2\,.
}
The eigenvectors of the matrix~\eqref{190501} are
\eq{190503}{
\begin{pmatrix}
s-1-\lambda
\\
1
\end{pmatrix}.
}
If $s>0$, then we have real eigenvalues of different sign, therefore the stationary point is 
a saddle point. If $s\in(-8,0)$ then we have a pair of complex conjugate eigenvalues, 
and therefore the stationary point is a spiral point. If $s<-8$ then we have a pair of 
negative eigenvalues, and therefore the stationary point is a node. Since $\kappa^r>0$ 
{\bf(}see~\eqref{160502}{\bf)}, our field always has a saddle point at $U^r$. At the 
point $U^l$ all three situations can occur, but we are interested now what happens 
for $\eps$ slightly bigger than $\eps_0$ and in this case a node occurs at $U^l$. 
Indeed, we already know that $\kappa^l\to-\infty$ as $\eps\to\eps_0$ from above, i.\,e., $s\to-\infty$.
Therefore, for $\eps$ slightly bigger than $\eps_0$, we have two negative eigenvalues 
of the matrix~\eqref{190501}.

Now we consider the behavior of eigenvectors~\eqref{190503}. Since
\eq{141004}{
s-1-\lambda=\frac{s-2\mp\sqrt{s^2+8s}}2=\frac{2(1-3s)}{s-2\pm\sqrt{s^2+8s}}\,,
}
i.\,e., for one sign in front of the square root we get $s-1-\lambda=s+1+O(s^{-1})$ for 
the other sign we get $s-1-\lambda\to-3$ as $s\to+\infty$. This means that
one eigenvector at $U^r$ has a small positive slope tending to $0$, the slope of the second 
eigenvector tends to $-1/3$ as $\eps\to\eps_0$ from above, and we have the behavior of the integral curves 
of our field as on Fig.~\ref{210501}. 

\begin{figure}[h]
    \centering
    \includegraphics[scale = 0.7]{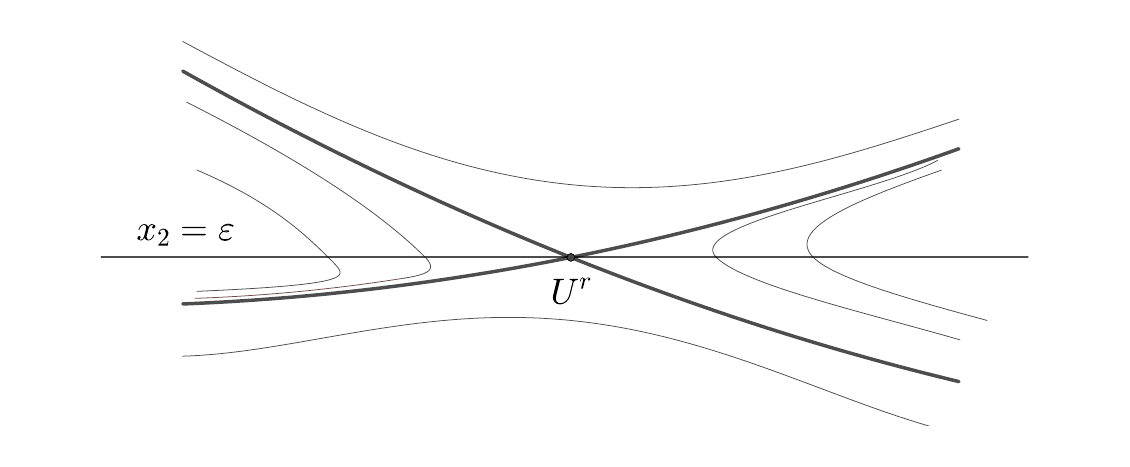}
    \caption{The integral curves near the point $U^r$}
    \label{210501}
\end{figure}

One eigenvector at $U^l$ has a small 
negative slope tending to $0$ as well, the slope of the second eigenvector also tends to $-1/3$, 
and we have the behavior of the integral curves of our field as on Fig.~\ref{210502}.

\begin{figure}[h]
    \centering
    \includegraphics[scale = 0.6]{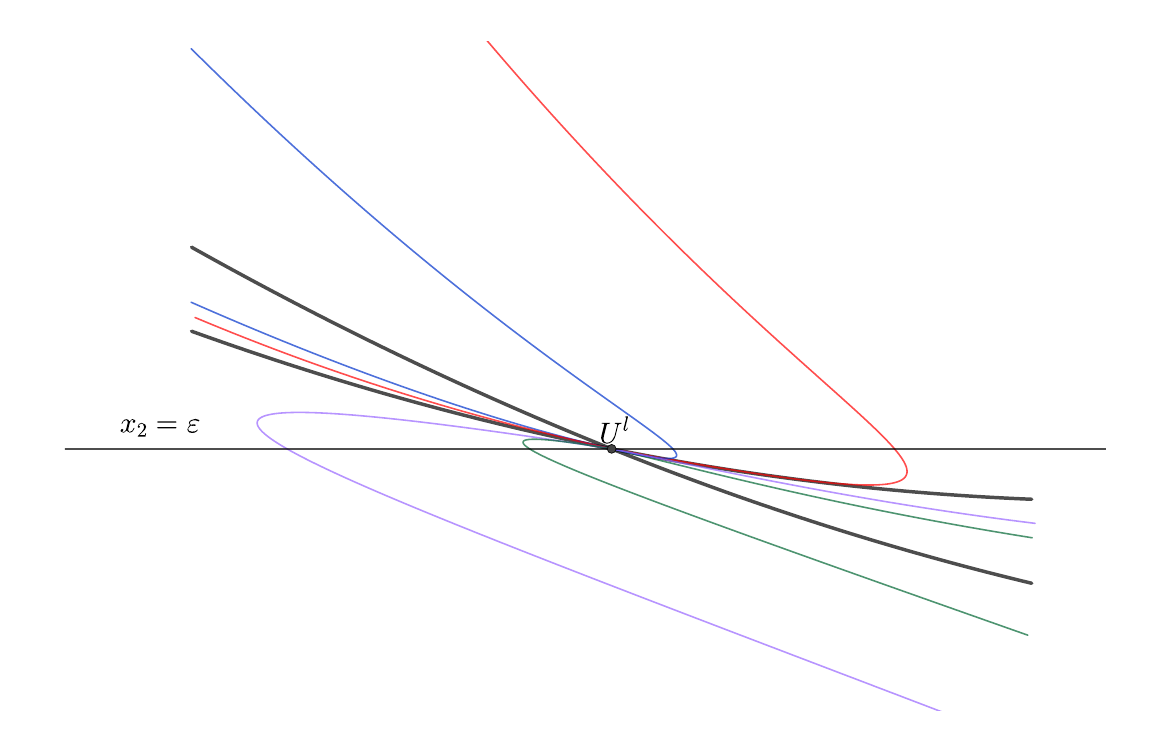}
    \caption{{The integral curves near the point $U^l$}}
    \label{210502}
\end{figure}

Consider the integral curve (let us call it $\ell_\eps$) starting at $U^r$ along the eigenvector of 
the Jacobian matrix that has a small positive slope. After shifting right by $\eps$, 
this curve will be the spine of our left herringbone. To this aim
we need to verify that $\ell_\eps$ runs inside the domain
where $D_+>0$ and $D_->0$.

First we note that $\ell_\eps$ can leave the strip across the upper boundary somewhere on the interval $[U^l,U^r)$, 
because from the lower boundary and from the part of the upper boundary which is to the left from $U^l$ it is separated
by the integral curve that pass through the point $U^l$
with a slope close to $-\tfrac13$. As it was already mentioned,
$s-1-\lambda\approx s+1=2+\eps\kappa^r$, that is, $\ell_\eps$
starts at $U^r$ between the curves $X_0$ and $X_1$, since $\eps\kappa^r<2+\eps\kappa^r<1+2\eps\kappa^r$ (if $\eps$ is not too far from $\eps_0$, i.\,e., if $\eps\kappa^r$ is sufficiently 
large). Since the integral lines of the field have slope equal
to $1$ at any point of $X_1$ and the curve $X_1$ itself has
the slope close to zero for $x_1\in(u^l,u^r)$, the integral
lines intersect $X_1$ from the top right to the bottom left.
Therefore, our curve $\ell_\eps$ cannot intersect $X_1$ inside
the interval $(u^l,u^r)$, because near the point $U^r$ it goes
under the curve $X_1$. Thus, $\ell_\eps$ leaves the strip $\Oe$
through the point $U^l$, being inside the strip below the line
$X_1$, i.\,e., in the domain $D_+>0$.

Verification that the slope of the spine is grater than $-1$ 
is reduced to the verification of the inequality 
$D_-(u_0,\eps_0)>0$,
because then for $\eps$ sufficiently close to $\eps_0$
the inequality $D_-(x)$ would be fulfilled on the part of 
$\ell_\eps$ inside the strip.
The inequalities~\eqref{150506} and~\eqref{120301} imply:
$$
f_+''(u_0+\eps_0)-f_-''(u_0-\eps_0)>0\,,
$$
whence
$$
D_-(u_0,\eps_0)=D_+(u_0,\eps_0)+
f_+''(u_0+\eps_0)-f_-''(u_0-\eps_0)>0\,.
$$

The above consideration almost proves Theorem~\ref{160504}. 
Recall that the spine of left herringbone is shifted to the 
right by $\eps$ in comparison with the integral curve 
$\ell_\eps$. Thus, we have constructed the domain 
$\Om{W+}(u^l,u^r)$ with a left 
herringbone foliated this domain. 

It remains to check that from both sides of this domain 
we are able to construct simple right foliation, i.\,e., to 
check that conditions~(2.2) are fulfilled for $u\in(u_1,u^l)$ 
and for $u\in(u^r,u_2)$ for some $u_1$ and $u_2$. 
However, this is a direct consequence of the definition 
of the points $u^r$ and $u^l$: they are the neighbor simple 
roots of the equation $D_+(u,\eps)=0$ such that $D_+(u,\eps)<0$ 
for $u^l<u<u^r$. This means that $D_+(u,\eps)>0$ on some 
intervals $(u_1,u^l)$ and $(u^r,u_2)$. Since we have assumed 
that conditions~\eqref{150501} and~\eqref{150502} do not 
degenerate simultaneously at the point $\eps=\eps_0$, i.\,e., 
the condition $D_-(u,\eps)>0$ remains true on $[u_1,u_2]$, we 
can construct the simple right foliation on $\Om{R}(u_1,u^l)$
and on $\Om{R}(u^r,u_2)$.
\end{proof}

We will often use the symbol $\ell_\eps$, so let us define it 
rigorously. 

\begin{Def}
Symbol $\ell_\eps$ denotes the part of the integral curve of 
the field~(5.3) that goes inside the strip $\Oe$ and passes 
with a positive slope through the stationary point $(u,\eps)$, 
where $D_+(u,\eps)=0$.
\end{Def}

\begin{Rem}
The same symbol $\ell_\eps$ will denote analogous curves in
symmetrical cases. Would be necessary to consider two such
curves simulteneously, we add some index, for example,
$\ell_\eps^+$ и $\ell_\eps^-$. 
\end{Rem}

Now we would like to investigate dependence on $\eps$ of 
the mutual position of the curves $\ell_\eps$.
In fact, we repeat the reasoning contained in the proof of 
Proposition~6.1.

\begin{Prop}
\label{141003}
Let $D_+$ vanish at the point $(u,\eps)$. Assume that 
\eq{070304}{
\kappa_+^{\LL}\df\frac{2f'''_+(u+\eps)}
{f''_+(u+\eps)-f''_-(u-\eps)-2\eps f'''_+(u+\eps)}>0\,. 
}
Then\textup, while the curve $\ell_\eps$ goes inside the domain 
where $x_2D_+D_->0$, its slope is strictly between $-1$ and $1$ 
\textup(i.\,e.\textup, it can be considered as a graph of a 
function of the argument $x_1$\textup). Moreover\textup, for 
$\eps'<\eps''$ the curve $\ell_{\eps'}$ goes strictly above 
the curve $\ell_{\eps''}$ while they are inside this domain.
\end{Prop}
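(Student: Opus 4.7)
The plan is to adapt the argument from Proposition~6.1 to the present setting, in which $\ell_\eps$ departs from a stationary point $(u,\eps)$ on the locus $\{D_+=0\}$ and we must control its shape globally.

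For the first assertion (slope in $(-1,1)$), the local picture near $(u,\eps)$ is already supplied by the Jacobian analysis carried out in the proof of Theorem~\ref{160504}. Under the hypothesis $\kappa_+^{\LL}>0$, which is equivalent to $s=1+\eps\kappa_+^{\LL}>1$, a direct inspection of~\eqref{141004} shows that the eigenvector of positive slope at the saddle has slope strictly in $(0,1)$, so $\ell_\eps$ leaves the stationary point with a slope in that range. To see that the slope remains below $1$ throughout, I would argue, as in Theorem~\ref{160504}, that the field crosses the curve $X_1=0$ only from upper-right to lower-left; hence once $\ell_\eps$ lies below $X_1$ it cannot re-enter the region where the field's slope exceeds $1$. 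The bound slope$\,>-1$ is obtained symmetrically, by examining the curve on which $\dot x_2+\dot x_1=0$; on the upper boundary this curve is governed up to a positive factor by $D_-$, and the hypothesis $x_2 D_+ D_->0$ (which is where the assertion is being made) keeps $\ell_\eps$ on the good side of it. Combined, the two bounds make $\ell_\eps$ a graph over $x_1$.

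For the monotonicity in $\eps$, I would argue by contradiction: suppose $\ell_{\eps'}$ and $\ell_{\eps''}$ with $\eps'<\eps''$ met at a common point $p$ inside the region $x_2 D_+ D_->0$ (note $p$ cannot be an endpoint of either curve, since those sit on the distinct horizontal lines $x_2=\eps'$ and $x_2=\eps''$). The principal obstacle here is that the field~(5.3) depends on $\eps$ as a parameter, so $\ell_{\eps'}$ and $\ell_{\eps''}$ are integral curves of \emph{different} vector fields and standard ODE uniqueness does not directly rule out a crossing. The remedy, borrowed from the proof of Proposition~6.1, is to establish monotone dependence of the slope of the field on $\eps$ at each fixed point of the region. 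Differentiating the expressions for $X_0$, $X_1$, and $X_\infty$ with respect to $\eps$ and using the sign information on $D_\pm$ and their first derivatives already available in this region, one obtains a one-signed formula for $\partial(\dot x_2/\dot x_1)/\partial\eps$, which forces the tangent slopes of the two curves at the hypothetical crossing point $p$ to differ in a definite direction. This contradicts their meeting at $p$ and fixes the vertical ordering $\ell_{\eps'}$ above $\ell_{\eps''}$ throughout the common domain.

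The technical heart of the argument is thus the sign computation of $\partial(\dot x_2/\dot x_1)/\partial\eps$ along the region of interest; once this monotonicity is in hand, both conclusions of the proposition follow from essentially qualitative reasoning identical to that in the treatment of simple foliation curves in Proposition~6.1.
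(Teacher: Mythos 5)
There is a genuine gap in your monotonicity argument. Knowing that the slope of the field~(5.3) at a fixed point is monotone in $\eps$ (this is exactly Proposition~5.4 of~\cite{First}, which the paper simply cites rather than re-derives) does \emph{not} by itself contradict the existence of a common point of $\ell_{\eps'}$ and $\ell_{\eps''}$: two distinct curves may perfectly well cross transversally, with different tangent slopes, at a point. The contradiction only appears at a \emph{first} crossing, i.e.\ when you already know which curve lies above the other on one side of the hypothetical crossing point; then the graph comparison forces the slope of the upper curve to be $\ge$ that of the lower one there, while Proposition~5.4 forces the strict opposite inequality. Your proposal never establishes this initial ordering, and in fact contains no ingredient that could determine its \emph{direction}. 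The paper obtains it by comparing, at the common starting locus, the slope of the curve of stationary points $\eps\mapsto(u(\eps),\eps)$ (differentiating $D_+(u,\eps)=0$ gives $u'(\eps)=1+2\eps\kappa_+^{\LL}=2s-1$, hence slope $1/(2s-1)$) with the initial slope of $\ell_\eps$ along the eigenvector, $2/\bigl(s-2+\sqrt{s^2+8s}\bigr)$, and checking that the latter is strictly larger for $s>1$; this shows that the starting point of $\ell_{\eps'}$ lies above $\ell_{\eps''}$, after which the no-crossing argument closes the proof. That this step cannot be skipped is visible from the symmetric Proposition~\ref{070301}, where the ordering comes out reversed: the direction is a genuine consequence of such a slope comparison, not of the field's monotonicity in $\eps$ alone.

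A smaller remark on the first assertion: you make it harder than it is. In the region where the curve is considered ($0<x_2<\eps$, $D_+>0$, $D_->0$) the slope of the field is
\begin{equation*}
\frac{\dot x_2}{\dot x_1}
=\frac{(\eps-x_2)D_--(\eps+x_2)D_+}{(\eps-x_2)D_-+(\eps+x_2)D_+}\,,
\end{equation*}
which lies strictly in $(-1,1)$ pointwise, so no tracking of crossings of $X_1$ is needed. Moreover, the $X_1$-crossing picture you invoke was established in the proof of Theorem~\ref{160504} only for $\eps$ close to $\eps_0$ (where $X_1$ is nearly horizontal between $U^l$ and $U^r$), whereas the present proposition is not restricted to that regime, so that borrowed argument is not available here in the generality you need.
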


\begin{proof}
Since we consider the strict inequality $x_2D_+D_->0$ and our 
curve $\ell_\eps$ starts from the upper boundary of the strip, 
we consider this curve only while it is inside the upper half 
of the strip. Moreover, since the beginning part of the curve 
$\ell_\eps$ goes in the domain where $D_+>0$ and $D_->0$, we 
consider this curve only while these conditions are fulfilled 
as well. This implies that the slope of $\ell_\eps$ is strictly 
between $-1$ and $1$. Therefore, the assertion that one such 
curve is below than other is meaningful.

Define the function $\eps\mapsto u(\eps)$ implicitly by the 
equation $D_+(u,\eps)=0$, i.\,e.,
$$
f_+'(u+\eps)-f_-'(u-\eps)-2\eps f_+''(u+\eps)=0.
$$
Taking the derivative with respect to $\eps$ we get
$$
u'(\eps)=\frac{f_+''-f_-''+2\eps f_+'''}
{f_+''-f_-''-2\eps f_+'''}=1+2\eps\kappa_+^\LL.
$$ 
Therefore, the slope of this curve is
$$
\frac1{u'}=\frac1{1+2\eps\kappa_+^\LL}=\frac1{2s-1},
$$
where $s=1+\eps\kappa_+^\LL$. Note that according the 
hypotheses of the proposition we have 
$\kappa_+^\LL>0$, and therefore $s>1$.

On the other side, the curve $\ell_\eps$ starts at the point 
$(u,\eps)$ along the eigenvector of the Jacobian matrix with 
the slope
$$
\frac1{s-1-\lambda}=\frac2{s-2+\sqrt{s^2+8s}}, 
$$
{\bf(}see~\eqref{141004}{\bf)}.

It is easy to check that
$$
2s-1>\frac{s-2+\sqrt{s^2+8s}}2
$$
for $s>1$, i.\,e., the slope of $\ell_\eps$ is strictly 
bigger than the slope of the curve $u(\eps)$ at their common 
point $(u(\eps),\eps)$. Thus we can conclude that for $\eps'$ 
sufficiently close to $\eps''$ the point $(u(\eps'),\eps')$ 
lies above the curve $\ell_{\eps''}$. Therefore, the whole 
curve $\ell_{\eps'}$ is above $\ell_{\eps''}$ in some 
neighborhood of this point. We continue these curves to the 
left and assume that they have a crossing point. 
Since the curve $\ell_{\eps'}$ was above the curve 
$\ell_{\eps''}$ till this point, its slope has to be greater 
than the slope of the second curve. However, according to 
Proposition~5.4, this is impossible inside the domain where 
$x_2D_+D_->0$.
\end{proof}

In order not to be restricted to words that in symmetric cases 
everything is similar, we formulate these symmetric statements.

\begin{Prop}
\label{070301}
Let $D_-$ vanish at the point $(u,-\eps)$. Assume that 
\eq{070305}{
\kappa_-^\LL\df\frac{2f'''_-(u+\eps)}
{f''_-(u+\eps)-f''_+(u-\eps)-2\eps f'''_-(u+\eps)}>0\,. 
}
Denote by $\ell_\eps$ the integral curve of the field 
\textup{(5.3)} inside the strip $\Oe$ with a negative slope 
at the point $(u,-\eps)$. Then\textup, while the curve 
$\ell_\eps$ goes inside the domain where $x_2D_+D_-<0$, 
its slope is strictly between $-1$ and $1$ 
\textup(i.\,e.\textup, it can be considered as a graph of 
a function of the argument $x_1$\textup). Moreover\textup, 
for $\eps'<\eps''$ the curve $\ell_{\eps'}$ goes strictly 
below the curve $\ell_{\eps''}$ while they are inside 
this domain.
\end{Prop}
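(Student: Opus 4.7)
The plan is to mirror the proof of Proposition~\ref{141003} under the reflection $x_2\mapsto -x_2$ combined with the swap $f_+\leftrightarrow f_-$. This symmetry sends the field~(5.3) to itself, interchanges $D_+$ with $D_-$, sends the upper-boundary stationary point $(u,\eps)$ to the lower-boundary one $(u,-\eps)$, and reverses the sign of the slope of every integral curve. In particular, the quantity $\kappa_+^\LL$ from~\eqref{070304} is carried to precisely the $\kappa_-^\LL$ defined in~\eqref{070305}, so the positivity hypothesis is its exact counterpart. One could finish the proof by invoking this symmetry, but it is hardly longer to retrace the argument.

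First I would observe that while $\ell_\eps$ lies in the region $x_2D_+D_-<0$ starting from the lower boundary $x_2=-\eps<0$, one has $D_+D_->0$, and continuity from the boundary forces both $D_+>0$ and $D_->0$ along the initial part of the curve. Inspecting~(5.3) shows that slope $+1$ of an integral curve forces $D_+=0$ and slope $-1$ forces $D_-=0$; both are excluded, so the slope of $\ell_\eps$ lies strictly between $-1$ and $1$, and comparison of two such curves is meaningful.

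Next, define $u(\eps)$ implicitly by $D_-(u(\eps),-\eps)=0$. A direct differentiation parallel to the computation in the proof of Proposition~\ref{141003} yields $u'(\eps)=1+2\eps\kappa_-^\LL=2s-1$, where $s\df 1+\eps\kappa_-^\LL>1$ by hypothesis; hence the curve $\eps\mapsto(u(\eps),-\eps)$ has slope $dx_2/dx_1=-1/(2s-1)$, which is small and negative. On the other hand, the eigenvector of the Jacobian at $(u,-\eps)$ responsible for $\ell_\eps$ is the mirror of the one in~\eqref{141004}, so $\ell_\eps$ starts with slope $-2/\bigl(s-2+\sqrt{s^2+8s}\bigr)$. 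The elementary inequality $2s-1>\tfrac12\bigl(s-2+\sqrt{s^2+8s}\bigr)$ for $s>1$, already used in the previous proof, then shows that the slope of $\ell_\eps$ is strictly larger in absolute value than the slope of the $u(\eps)$-curve. Tracing the geometry, remembering that the $x_2$-coordinate of the starting points $(u(\eps),-\eps)$ now decreases with $\eps$, one finds that for $\eps'<\eps''$ sufficiently close the starting point $(u(\eps'),-\eps')$ of $\ell_{\eps'}$ lies strictly below $\ell_{\eps''}$.

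Extension from a neighborhood of the starting point to the entire curves is obtained by the same no-crossing argument as before: if $\ell_{\eps'}$ and $\ell_{\eps''}$ met somewhere in $\{x_2D_+D_-<0\}$, the lower curve would need a strictly larger slope at the meeting point, contradicting Proposition~5.4. The one genuine place where care is required is the bookkeeping of signs introduced by the reflection, which reverses ``above'' and ``below'' relative to Proposition~\ref{141003} and accounts for the reversed conclusion of the present proposition.
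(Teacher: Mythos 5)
Your proposal is correct and is essentially the paper's own route: the paper proves only Proposition~\ref{141003} and obtains the present statement (together with Propositions~\ref{080301} and~\ref{080302}) by exactly the reflection--swap symmetry you describe, and your retraced computations check out ($D_-(u,-\eps)=0$ differentiates to $u'(\eps)=1+2\eps\kappa_-^\LL=2s-1$, the starting slope of $\ell_\eps$ is $-2/\bigl(s-2+\sqrt{s^2+8s}\bigr)$, and the inequality $2s-1>\tfrac12\bigl(s-2+\sqrt{s^2+8s}\bigr)$ for $s>1$ gives that the starting point of $\ell_{\eps'}$ lies below $\ell_{\eps''}$). One sentence should be corrected, though: at a hypothetical first meeting point the geometry requires the lower curve $\ell_{\eps'}$ to have slope (in the sense $dx_2/dx_1$ you use throughout) \emph{at most} that of $\ell_{\eps''}$ --- to catch up while the curves are continued to the left it must rise faster leftward, i.e.\ have the more negative slope --- whereas Proposition~5.4 on the set $\{x_2D_+D_-<0\}$ says the slope is strictly decreasing in $\eps$, so $\ell_{\eps'}$ in fact has the strictly \emph{larger} slope there; that clash is the contradiction. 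As written, ``the lower curve would need a strictly larger slope at the meeting point, contradicting Proposition~5.4'' reverses the roles: the larger slope is what Proposition~5.4 supplies, and it is the geometric requirement of the opposite inequality that it contradicts.
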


Last two statements are related to the construction of left 
horizontal herringbones with ends on the upper and lower 
boundaries of the strip, respectively. However, if we are to 
deal with right herringbones, we will need statements related 
to the situation obtained by reflection with respect to the 
vertical axis. The spines of the right herringbones we 
obtain if we shift left by $\eps$ the integral curve 
of the following field:

\eq{070302}{ \begin{cases} \dot x_1=
(\eps+x_2)\big[\half(f_-'-f_+')-x_2f_+''\big] +
(\eps-x_2)\big[\half(f_-'-f_+')-x_2f_-''\big]\,;
\\
\dot x_2=(\eps+x_2)\big[\half(f_-'-f_+')-x_2f_+''\big] -
(\eps-x_2)\big[\half(f_-'-f_+')-x_2f_-''\big]\,, 
\end{cases} 
}
or in a slightly different form:
\eq{070303}{ \begin{cases} \dot x_1=
\eps(f_-'-f_+')-x_2(\eps+x_2)f_+''-x_2(\eps-x_2)f_-''\,; 
\\
\dot x_2=x_2(f_-'-f_+')-x_2(\eps+x_2)f_+''+
x_2(\eps-x_2)f_-''\,, 
\end{cases} 
}
where $f_+=f_+(x_1-x_2)$ and $f_-=f_-(x_1+x_2)$.

\begin{Prop}
\label{080301}
Let $D_-$ vanish at the point $(u,\eps)$. Assume that 
\eq{070306}{
\kappa_-^\RR\df\frac{2f'''_-(u-\eps)}
{f''_+(u+\eps)-f''_-(u-\eps)-2\eps f'''_-(u-\eps)}>0\,. 
}
Denote by $\ell_\eps$ the integral curve of the 
field~\eqref{070302} inside the strip $\Oe$ with a positive 
slope at the point $(u,-\eps)$. Then\textup, while the 
conjugate of the curve $\ell_\eps$ goes inside the domain 
where $x_2D_+D_->0$, its slope is strictly between $-1$ and $1$ 
\textup(i.\,e.\textup, it can be considered as a graph of 
a function of the argument $x_1$\textup). Moreover\textup, 
for $\eps'<\eps''$ the curve $\ell_{\eps'}$ goes strictly 
below the curve $\ell_{\eps''}$ while the conjugates of 
these curves are inside the domain where $x_2D_+D_->0$. 
\end{Prop}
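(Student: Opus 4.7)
The plan is to mimic the argument for Proposition~\ref{141003} step by step, accounting for the fact that the field~\eqref{070302} is obtained from the field~(5.3) by the reflection $x_1\mapsto-x_1$ and that the stationary points of~\eqref{070302} that we are interested in now lie on the lower boundary $x_2=-\eps$ instead of the upper one. These two reflections are what the word \emph{conjugate} in the statement encodes; taking them into account allows a direct transfer of all slope comparisons to the present setting.

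First I would compute the Jacobian matrix of the field~\eqref{070302} at the stationary point $(u,-\eps)$ (which, as a direct substitution shows, is stationary precisely when $D_-(u,\eps)=0$). A direct calculation, or the symmetry shortcut above, shows that this matrix has the same algebraic form as~\eqref{190501} with the parameter $s=1+\eps\kappa_-^\RR$. The hypothesis $\kappa_-^\RR>0$ yields $s>1$, so the matrix has two real eigenvalues of opposite sign given by~\eqref{190502}, and the eigenvector with positive slope has slope $\tfrac{2}{s-2+\sqrt{s^2+8s}}$ exactly as in~\eqref{141004}; this determines the initial direction of $\ell_\eps$. I would then define $\eps\mapsto u(\eps)$ implicitly by $D_-(u,\eps)=0$ and obtain, via the implicit function theorem, $u'(\eps)=1+2\eps\kappa_-^\RR=2s-1$, so that the curve of stationary points (or its reflection across $x_2=0$ that gives the conjugates) has slope $\pm\tfrac{1}{2s-1}$ at the point under consideration.

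Next, the elementary inequality $2s-1>\tfrac12\bigl(s-2+\sqrt{s^2+8s}\bigr)$, valid for $s>1$ and already used in the proof of Proposition~\ref{141003}, shows that the slope of $\ell_\eps$ strictly dominates (with the appropriate sign) the slope of the family of stationary points at their common point. This yields the ordering of two nearby curves $\ell_{\eps'}$ and $\ell_{\eps''}$ in a neighborhood of their starting points; the ordering is then promoted to the whole region where the conjugates stay inside $\{x_2D_+D_->0\}$ by the same monotonicity argument as at the end of the proof of Proposition~\ref{141003}, since a first crossing would force a slope inequality ruled out by Proposition~5.4, while the bound on the slope of $\ell_\eps$ being strictly between $-1$ and $1$ is automatic in that region.

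The only genuine obstacle is the bookkeeping for the two reflections: one has to track signs carefully in order to conclude ``$\ell_{\eps'}$ below $\ell_{\eps''}$'' rather than ``above'', and to verify that the relevant region is indeed $\{x_2D_+D_->0\}$ (with the negative sign of $x_2$ on the starting boundary compensating for the sign of $D_+D_-$ on the conjugate side) rather than its complement. Once this sign dictionary is in place, all the remaining computations are verbatim translations of those already performed for Proposition~\ref{141003}.
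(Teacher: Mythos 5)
Your overall strategy---redoing the proof of Proposition~\ref{141003} for the reflected field~\eqref{070302}---is exactly what the paper intends (it states Proposition~\ref{080301} without proof, as a symmetric statement), and several of your steps are right: the Jacobian at $(u,-\eps)$ is indeed proportional to the matrix~\eqref{190501} with $s=1+\eps\kappa_-^\RR$, and the initial slope of $\ell_\eps$ is $2/\bigl(s-2+\sqrt{s^2+8s}\bigr)$ (cf.~\eqref{100401}--\eqref{100403}). However, the one computation you state explicitly is wrong, and it is precisely the computation that decides ``below'' versus ``above''. Differentiating $D_-(u,\eps)=0$, i.e., $f_+'(u+\eps)-f_-'(u-\eps)-2\eps f_-''(u-\eps)=0$, gives
\[
u'(\eps)=-\frac{f_+''-f_-''+2\eps f_-'''}{f_+''-f_-''-2\eps f_-'''}=-\bigl(1+2\eps\kappa_-^\RR\bigr)=-(2s-1),
\]
not $+(2s-1)$: you copied the formula from the $D_+$ case of Proposition~\ref{141003} (the example confirms the sign: $u_-^l(\eps)$ in~\eqref{090401} is decreasing in $\eps$). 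Taken literally, your $u'=2s-1$, combined with the starting points sitting at $(u(\eps),-\eps)$, would give the locus of starting points the plane slope $-1/(2s-1)$ and your comparison argument would then yield the opposite ordering ``$\ell_{\eps'}$ above $\ell_{\eps''}$''. What saves the statement is a second sign flip you did not record: because the second coordinate of the starting point is $-\eps$, the locus has plane slope $-1/u'(\eps)=+1/(2s-1)$, again smaller than the initial slope of $\ell_\eps$, so for $\eps'<\eps''$ the point $(u(\eps'),-\eps')$ lies below $\ell_{\eps''}$, which is the claimed direction. Writing ``$\pm1/(2s-1)$'' and ``with the appropriate sign'' defers exactly the content that distinguishes this proposition (and Propositions~\ref{070301}, \ref{080302}) from Proposition~\ref{141003}, so as written the argument is incomplete at its decisive point.

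The same care is needed in the continuation step: Proposition~5.4 of~\cite{First} cannot be invoked verbatim, since it concerns the field (5.2)/(5.3), while your curves are integral curves of~\eqref{070302}. You need its mirrored form: under the reflection carrying (5.3) into~\eqref{070302}, slopes change sign and the values of $D_\pm$ at a point go over into the values of $D_\pm$ at the \emph{conjugate} point, so the slope of an integral curve of~\eqref{070302} at a point $y$ is strictly increasing in $\eps$ precisely when $\bar y$ lies in the set $\{x_2D_+D_->0\}$. This is exactly why the hypothesis of the proposition is phrased for the conjugate curve rather than for $\ell_\eps$ itself; your parenthetical about ``the negative sign of $x_2$ compensating for the sign of $D_+D_-$'' suggests you were looking for a condition at the points of $\ell_\eps$, which is not what is needed. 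With the mirrored statement in hand, a first crossing of $\ell_{\eps'}$ and $\ell_{\eps''}$ forces the slope of $\ell_{\eps'}$ to be at least that of $\ell_{\eps''}$ at the crossing point, contradicting the strict monotonicity in $\eps$, and the proof closes with the ordering as stated; without it, the inequality at the hypothetical crossing point is not justified.
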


\begin{Prop}
\label{080302}
Let $D_+$ vanish at the point $(u,-\eps)$. Assume that 
\eq{070307}{
\kappa_+^\RR\df\frac{2f'''_+(u-\eps)}
{f''_-(u+\eps)-f''_+(u-\eps)-2\eps f'''_+(u-\eps)}>0\,. 
}
Denote by $\ell_\eps$ the integral curve of the 
field~\eqref{070302} inside the strip $\Oe$ with a negative 
slope at the point $(u,\eps)$. Then\textup, while the 
conjugate of the curve $\ell_\eps$ goes inside the domain 
where $x_2D_+D_-<0$, its slope is strictly between $-1$ and $1$ 
\textup(i.\,e.\textup, it can be considered as a graph of 
a function of the argument $x_1$\textup). Moreover\textup, 
for $\eps'<\eps''$ the curve $\ell_{\eps'}$ goes strictly 
above the curve $\ell_{\eps''}$ while the conjugates of 
these curves are inside the domain where $x_2D_+D_-<0$.
\end{Prop}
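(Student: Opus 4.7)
The plan is to deduce Proposition~\ref{080302} from Proposition~\ref{141003} via the two reflections that carry the field~(5.3) into the field~\eqref{070302}. The first reflection, $(x_1,x_2)\mapsto(-x_1,x_2)$, exchanges right and left simple foliations and inverts the sign of every slope; the second, $(x_1,x_2)\mapsto(x_1,-x_2)$, swaps the roles of the upper and lower boundaries and of the points $(u,\eps)$ and $(u,-\eps)$. Their composition sends the hypothesis of Proposition~\ref{141003} ($D_+(u,\eps)=0$, $\kappa_+^\LL>0$, $\ell_\eps$ leaving with positive slope) onto the hypothesis of the present statement ($D_+(u,-\eps)=0$, $\kappa_+^\RR>0$, the conjugate of $\ell_\eps$ leaving the image of $(u,\eps)$ with negative slope), while the region $\{x_2D_+D_->0\}$ becomes $\{x_2D_+D_-<0\}$ and the notion \emph{above} is reversed exactly once across the two reflections, giving back the \emph{above} claim of the present statement.

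Once the dictionary is fixed, the analytic content is identical to that in the proof of Proposition~\ref{141003}. Implicit differentiation of $D_+(u(\eps),-\eps)=0$ yields $u'(\eps)=-(1+2\eps\kappa_+^\RR)=-(2s-1)$, where $s\df 1+\eps\kappa_+^\RR>1$, so the graph of the stationary points has slope $-1/(2s-1)$. The Jacobian of~\eqref{070302} at $(u,-\eps)$, after the two reflections, takes the form~\eqref{190501}, and the eigenvector selecting the prescribed branch of $\ell_\eps$ has slope of modulus $2/(s-2+\sqrt{s^2+8s})$ by the identity~\eqref{141004}. The elementary inequality $2s-1>\tfrac12(s-2+\sqrt{s^2+8s})$ for $s>1$ then delivers the strict slope comparison that locally orders $\ell_{\eps'}$ above $\ell_{\eps''}$ for $\eps'<\eps''$, after the reflections have been undone.

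The global non-crossing of the curves $\ell_{\eps'}$ and $\ell_{\eps''}$ inside $\{x_2D_+D_-<0\}$, together with the fact that the slope of each $\ell_\eps$ stays strictly between $-1$ and $1$ throughout this region, follows from Proposition~5.4 in the reflected frame, exactly as in the proof of Proposition~\ref{141003}. I expect the main obstacle to be purely combinatorial bookkeeping: one must keep straight how each of the two reflections acts on the sign of $\kappa$, on the choice of eigenvector branch, on the orientation convention \emph{above}/\emph{below}, and on the switch of the admissible region from $\{x_2D_+D_->0\}$ to $\{x_2D_+D_-<0\}$. The analytic estimates themselves are all already available from the proofs of Theorem~\ref{160504} and Proposition~\ref{141003}; no new computation is required.
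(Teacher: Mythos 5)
Your overall strategy is the paper's own: Proposition~\ref{080302} is stated there without a separate proof, as one of the ``symmetric statements'' obtained from Proposition~\ref{141003}, and your computations in the new frame are the correct transcriptions of that proof (implicit differentiation of $D_+(u(\eps),-\eps)=0$ indeed gives $u'(\eps)=-(1+2\eps\kappa_+^{\RR})=-(2s-1)$, the relevant eigenvector slope has modulus $2/(s-2+\sqrt{s^2+8s})$ by~\eqref{141004}, the inequality $2s-1>\tfrac12(s-2+\sqrt{s^2+8s})$ for $s>1$ gives the local ordering, and non-crossing comes from the analogue of Proposition~5.4).

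However, the reflection dictionary on which you hang all sign and orientation questions is wrong, and you explicitly defer to it. The composition of $(x_1,x_2)\mapsto(-x_1,x_2)$ and $(x_1,x_2)\mapsto(x_1,-x_2)$ (with the forced data change $\tilde f_+(t)=f_-(-t)$, $\tilde f_-(t)=f_+(-t)$) does carry field~(5.3) into field~\eqref{070302}, but it sends the configuration of Proposition~\ref{141003} --- stationary point $(u,\eps)$ on the upper boundary, $D_+(u,\eps)=0$, positive slope, region $x_2D_+D_->0$, conclusion ``above'' --- to the configuration of Proposition~\ref{080301}: stationary point $(u,-\eps)$ on the lower boundary, $D_-(u,\eps)=0$, positive slope, region $x_2D_+D_->0$ at the conjugate points, conclusion ``below''. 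It does not produce the hypotheses of the present statement, whose stationary point $(u,\eps)$ lies on the upper boundary with $D_+(u,-\eps)=0$; consequently the Jacobian must be evaluated at $(u,\eps)$, not at $(u,-\eps)$, and it is $\ell_\eps$ itself, not its conjugate, that has negative slope there. Your parity remark is also self-contradictory: if ``above'' is reversed exactly once, the transported conclusion would read ``below''. The correct reduction uses the single reflection $x_1\mapsto-x_1$, $f_\pm(t)\mapsto f_\pm(-t)$: it carries~(5.3) into~\eqref{070302} up to time reversal (harmless for the trajectories), turns $D_+(u,\eps)=0$ into $D_+(u,-\eps)=0$ and $\kappa_+^{\LL}$ into $\kappa_+^{\RR}$, flips the sign of the slope, converts the region $x_2D_+D_->0$ into $x_2D_+D_-<0$ evaluated at the conjugate points, and preserves ``above'' --- which is exactly how the stated conclusion arises; equivalently, apply your two-reflection composition to Proposition~\ref{070301} instead of Proposition~\ref{141003}. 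With the dictionary corrected, the rest of your argument stands.
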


Let us make a remark that will not be needed just now, however 
in the future it allow us not to repeat the reasoning already 
given, and it will be sufficient to refer to this proposition.

\begin{Rem}
In the proposition just proved, it was assumed that we 
consider the curve $\ell_\eps$ only in the upper or lower 
half of the strip. However, it is not difficult to extend 
this assertion to the other half if the integral curve 
$\ell_\eps$ crosses the midline of the strip at the point 
$(u_0,0)$, where $u_0$ is a simple root of the equation 
$f'_+(u)=f'_-(u)$. In this case when passing to the other 
half of the strip, the relations over or under are reversed.
The general rule can be formulated as follows: the larger 
$\eps$, the closer the curve $\ell_\eps$ to the midline of 
the strip $x_2=0$.
\end{Rem}

\section{Examples. Polynomials of third degree: \\
the case of one minor pocket}

In Section~7, were we started consideration of the third 
degree polynomials
$$
f_\pm(t)=a_3^\pm t^3+a_2^\pm t^2+a_1^\pm t+a_0^\pm\,,
$$
the case $a_3^+=a_3^-$ was described in full generality, and 
the foliation in the case $a_3^+\ne a_3^-$ was investigated 
for sufficiently small $\eps$ only. Now we are ready 
to describe the foliation in the case when the discriminant 
of the quadratic polynomial 
$$
f_+'(t)-f_-'(t)=3(a_3^+-a_3^-)t^2+2(a_2^+-a_2^-)t+(a_1^+-a_1^-)
$$
is negative, i.\,e.,
\eq{141001}{
3(a_3^+-a_3^-)(a_1^+-a_1^-)>(a_2^+-a_2^-)^2.
}

Using the symmetry we can assume  $|a_3^+|>|a_3^-|$ 
(if not we interchange $f_+$ and $f_-$) and $a_3^+>0$ 
(if not we replace $t$ by $-t$). We will investigate 
now the field~(5.3) to show that the situation falls 
under the scope of Theorem~\ref{160504} for
\eq{190302}{
\eps_0=\eps_0^+\df\sqrt{\frac1{12a_3^+}
\Big[a_1^+-a_1^--\frac{(a_2^+-a_2^-)^2}{3(a_3^+-a_3^-)}\Big]}\,.
}

We start with calculation of the functions $D_\pm$:
\eq{130301}{
2x_2D_+(x)=3(a_3^+-a_3^-)(x_1-x_2)^2+2(a_2^+-a_2^-)(x_1-x_2)+
(a_1^+-a_1^-)-12a_3^+x_2^2\,;
}
\eq{130302}{
2x_2D_-(x)=3(a_3^+-a_3^-)(x_1+x_2)^2+2(a_2^+-a_2^-)(x_1+x_2)+
(a_1^+-a_1^-)+12a_3^-x_2^2\,.
}
We see that $D_+(u,\eps)\ge0$ and $D_-(u,\eps)\ge0$ for all 
$u$ if $\eps\le\eps_0$ and, therefore, we have the right 
simple foliation $\Omega_\eps=\Om{R}(-\infty,+\infty)$. 
For $\eps=\eps_0^+$ and
\eq{140301}{
u_0^+=\eps_0^+-\frac{a_2^+-a_2^-}{3(a_3^+-a_3^-)}
}
condition~\eqref{150503} is fulfilled. Moreover, $\eps_0^+$ 
is a simple root, i.\,e., all 
conditions~\eqref{150505}--\eqref{120301} are fulfilled 
as well. It will be evident if we rewrite the expression 
from~\eqref{130301} in the form:
\eq{190301}{
2x_2D_+(x)=3(a_3^+-a_3^-)\big((x_1-u_0^+)-(x_2-\eps_0^+)\big)^2
-12a_3^+\big(x_2^2-(\eps_0^+)^2\big)\,.
}

Thus, from Theorem~\ref{160504} we know that around the point 
$(u_0^++\eps_0^+,\eps_0^+)$ a minor pocket begins to grow. 
The question is what happens if the parameter $\eps$ is 
not very close to $\eps_0^+$. We will see that the situation 
depends on the sign of $a_3^-$.

First, we consider the case $a_3^-\ge0$. When 
$\eps\le\eps_0^+$ we have the simple right foliation:
\eq{160402}{
\Omega_\eps=\Om{R}(-\infty,+\infty)\,,
}
because for such $\eps$ we have $D_+\ge0$ and $D_-\ge0$ on 
the upper boundary (in fact, in the whole strip) and we apply 
Proposition~2.2 (\cite{First}). We would like to show that 
for all $\eps$ greater than $\eps_0^+$ the foliation is 
described by the decomposition
\eq{140302}{
\Omega_\eps=\Om{R}(-\infty,v_+)\cup
\Om{W$+$}(v_+,u_+^r)\cup\Om{R}(u_+^r,+\infty)\,,
}
i.\,e., a herringbone is extended from the point 
$(v_++\eps,\,\eps)$ till the point $(u_+^r+\eps,\,\eps)$.

Looking on the expressions in~\eqref{190301} we see that 
the equation $D_+(u,\eps)=0$ has exactly two roots $u_+^l$ 
and $u_+^r$ if $\eps>\eps_0^+$:
\eq{141002}{
u_+^{l,r}=u_0^++(\eps-\eps_0^+)\pm2
\sqrt{\frac{a_3^+}{a_3^+-a_3^-}\big(\eps^2-(\eps_0^+)^2\big)}.
}
From~\eqref{130302} we see that $D_-(u,\eps)>0$ for all 
$u$ and $\eps$. Therefore, to construct the foliation 
presented in~\eqref{140302} we need to find an integral 
curve of the field~(5.3) connecting the points $(v_+,\eps)$ 
and $(u_+^r,\eps)$.

The Jacobian matrix at the points $U_+^r$ and $U_+^l$ was 
partially investigated after the proof of Lemma~\ref{180501}, 
where the matrix proportional to the Jacobian was written 
in the form {\bf(}see~\eqref{190501}{\bf)}:
\eq{150301}{
\begin{pmatrix}
1&1-3s
\\
-1&s-1
\end{pmatrix},
}
where $s=1+\eps\kappa_+$ and $\kappa_+$ is given 
by~\eqref{180502}. In the example under consideration we have 
\eq{150302}{
\kappa_+^l=-\kappa_+^r\qquad\text{and}\qquad\kappa_+^r
=\sqrt{\frac{a_3^+}{(a_3^+-a_3^-)(\eps^2-(\eps_0^+)^2)}}\,.
}

The characteristic polynomial of the matrix~\eqref{150301} 
is $\lambda^2-s\lambda-2s$ and the eigenvalues are
\eq{150303}{
\lambda=\frac{s\pm\sqrt{s^2+8s}}2\,.
}
The corresponding eigenvectors are
\eq{150304}{
\begin{pmatrix}
s-1-\lambda
\\
1
\end{pmatrix}.
}

At $U_+^r$ we always have a saddle point and the picture of 
integral curves near this point looks like it is shown at 
Pic.~\ref{210501}. The integral curve we are interested in 
starts from $U_+^r$ along the eigenvector corresponding to 
$\lambda=\half(s-\sqrt{s^2+8s})$, 
i.\,e., its slope at $U_+^r$ is
$$
\frac1{s-1-\lambda}=\frac2{s-2+\sqrt{s^2+8s}}\,.
$$
When $\eps$ grows from $\eps_0^+$ till $+\infty$, 
the parameter $s$ decreases from $+\infty$ till some value 
greater than $1$, and therefore, this slope monotonically 
increases from $0$ till some value less than $1$.

As it was mentioned, the type of the stationary point 
$U_+^l$ depends on~$\eps$. We know that for $\eps$ sufficiently 
close to $\eps_0^+$ (when $s<-8$) the Jacobian matrix 
at $U_+^l$ has two negative eigenvalues, therefore $U_+^l$ 
is a node, and the picture of the integral curves near this 
point looks like at Pic.~\ref{210502}. In such a situation 
the integral curve we are interested in intersects the boundary 
exactly at the point $U_+^l$. This situation occurs for all 
$\eps$ if $80a_3^+\le81a_3^-$. Indeed, then 
$81(a_3^+-a_3^-)\le a_3^+$, whence
$$
s=1+\eps\kappa_+^l=1-
\sqrt{\frac{a_3^+\eps^2}{(a_3^+-a_3^-)(\eps^2-(\eps_0^+)^2)}}
\le 1-\sqrt{\frac{81\eps^2}{\eps^2-(\eps_0^+)^2}}<1-9=-8\,.
$$

If $81(a_3^+-a_3^-)>a_3^+$, then for
\eq{160401}{
\eps<\eps_1^+\df9\eps_0^+
\sqrt{\frac{a_3^+-a_3^-}{80a_3^+-81a_3^-}}
}
we have $s<-8$ and the point $U_+^l$ is a node and our 
integral curve $\ell_\eps$ intersects the upper boundary 
at the point $U_+^l$. For $\eps=\eps_1^+$ the eigenvalues 
of the Jacobian matrix are equal, and $J(U_+^l)$ is 
a Jordan cell, i.\,e., $U_+^l$ is an improper node and 
our integral curve still passes through $U^l_+$. 
The picture of integral curves is similar to the one 
shown at Pic.~\ref{210502}. The narrow angle between
the integral curves going along the eigenvectors of the 
Jacobian matrix (thicker curves at Pic.~\ref{210502}) 
shrinks as $\eps\nearrow\eps_1^+$ and for $\eps=\eps_1^+$ 
they turn into one curve.

The picture changes for $\eps>\eps_1^+$, when $s\in(-8,0)$ 
and $U_+^l$ is a spiral point. Then our integral curve 
cannot cross the upper boundary not only on the interval 
$(u_+^l,u_+^r)$, but at the point $U_+^l$ as well, i.\,e., 
for $\eps>\eps_1^+$ we have $v_+<u_+^l$. Thus, we have got 
what we wanted: for $a_3^-\ge0$ we have found an integral 
curve of our field that connects two points of the upper 
boundary with first coordinates $v_+$ ($v_+\le u_+^l$) and 
$u_+^r$, i.\,e., we can construct foliation~\eqref{140302}. 
Indeed, the condition $D_+>0$ is fulfilled on $\ell_\eps$ 
because the curve $\ell_\eps$ goes strictly below the curve 
$X_1$, where $D_+=0$, and $D_+>0$ on the whole upper 
boundary except the interval $[u_+^l,u_+^r]$. The inequality 
$D_->0$ is true everywhere in the strip $\Oe$.

Let us say a few words about the behavior of the end points 
of our pocket as $\eps\to\infty$. First, it should be mentioned 
that $v_+$ always exists. Indeed, the curve $\ell_\eps$ can 
be considered as a graph of a function of $x_1$ 
(see Proposition~\ref{141003}) and it cannot intersect the
midline of the strip. Therefore, it is sufficient to ascertain
that the second coordinate of the point on the curve cannot be 
bounded as $x_1\to-\infty$. However, this is true for any
integral curve, because if $x_2$ is fixed and $x_1$ has big 
negative value then both $D_+$ and $D_-$ are approximately
$3(a_3^+-a_3^-)x_1^2/2x_2$, and therefore, the slope of an 
integral curve tends to $-x_2/\eps$, i.\,e., integral curves 
have negative slope and are convex. Hence, if we go along 
such a curve to the left we must intersect the upper bound 
of the strip.

Secondly, from the expressions in~\eqref{141002} we can 
conclude that the right end point of the pocket is
$$
u_+^r+\eps=2\eps\Bigg(1+\sqrt{\frac{a_3^+}{a_3^+-a_3^-}}\Bigg)
+u_0^+-\eps_0^++o(1)\,,
$$
which tends always to $+\infty$. The left end point of 
the pocket is $v_++\eps$, which tends to~$-\infty$ 
if $a_3^->0$, because
$$
u_+^l+\eps=2\eps\Bigg(1-\sqrt{\frac{a_3^+}{a_3^+-a_3^-}}\Bigg)
+u_0^+-\eps_0^++o(1)\,.
$$
If $a_3^-=0$ we cannot say anything about the asymptotic 
behavior of the left end point, because $u_+^l+\eps$ is 
bounded in this case, but we have no information about $v_+$, 
except the inequality $v_+<u_+^l$ for large $\eps$.

\section{Examples. Polynomials of third degree: 
\\the case of two minor pocket}

Now, we turn to a more complicated case $a_3^-<0$. Recall 
that we have assumed that $a_3^+\ge|a_3^-|$, i.\,e., we 
have to consider $a_3^-\in[-a_3^+,0)$. In this situation 
we have one more critical value of $\eps$:
\eq{190303}{
\eps_0^-=\sqrt{\frac1{12|a_3^-|}\Big[a_1^+-a_1^--
\frac{(a_2^+-a_2^-)^2}{3(a_3^+-a_3^-)}\Big]}=
\sqrt{\frac{a_3^+}{|a_3^-|}}\eps_0^+\,.
}
This is the value of $\eps$, for which a root of $D_-$ on 
the boundary of the strip appears:
$$
D_-(u_0^-,\eps_0^-)=0
$$
namely,
\eq{140303}{
u_0^-=-\eps_0^--\frac{a_2^+-a_2^-}{3(a_3^+-a_3^-)}\,.
}

For $\eps\le\eps_0^+$ we have the simple right foliation 
as before:
\eq{160405}{
\Omega_\eps=\Om{R}(-\infty,+\infty)\,.
}
At the moment $\eps=\eps_0^+$ a minor pocket on the upper 
boundary appears and the foliation becomes
\eq{160403}{
\Omega_\eps=\Om{R}(-\infty,v_+)\cup\Om{E$+$}(v_+,u_+^r)\cup\Om{R}(u_+^r,+\infty)\,,
}
where $v_+=u_+^l$ if $\eps_0^+\le\eps\le\eps_1^+$ 
{\bf(}see~\eqref{160401}{\bf)}. If $\eps>\eps_1^+$ 
we can say only that $v_+<u_+^l$. To calculate the value 
of $v_+(\eps)$ we need to solve the Cauchy problem~(5.3) 
with the initial data $x_1(0)=u_+^r$ and $x_2(0)=\eps$. 
Since $(u_+^r,\eps)$ is a saddle point of the field, we 
need to specify the initial slope of the integral curve. 
It has to be the slope of one of eigenvectors of the 
Jacobian matrix. As it was told, we are interested in 
the integral curve $\ell_\eps$ starting along the 
eigenvector~\eqref{190503} with positive slope, i.\,e., 
with $\lambda=\half\big(s-\sqrt{s^2+8s}\big)$ and 
$s=1+\eps\kappa_+^r$. The value of $v_+(\eps)$ is the first 
coordinate of the next crossing point of this integral curve 
with the boundary $x_2=\eps$.

If 
\eq{301101}{
v_+(\eps_0^-)>u_0^-,
}
then at the moment $\eps=\eps_0^-$ a minor pocket on the 
lower boundary appears. We can guarantee 
inequality~\eqref{301101} only in the case when we have 
an explicit formula for $v_+$. This occurs in the case 
$80(a_3^+)^2\le81(a_3^-)^2$. Indeed, in such a case we have
$$
\frac{a_3^+}{|a_3^-|}\le
\frac{81(a_3^++|a_3^-|)}{80a_3^++81|a_3^-|}\,,
$$
or
$$
\eps_0^+\sqrt\frac{a_3^+}{|a_3^-|}\le
9\eps_0^+\sqrt\frac{a_3^+-a_3^-}{80a_3^+-81a_3^-}\,,
$$
where the expression on the left hand side is $\eps_0^-$ 
{\bf(}see~\eqref{190303}{\bf)}, whereas the expression on 
the right hand side is $\eps_1^+$ 
{\bf(}see~\eqref{160401}{\bf)}. Therefore, for 
$80(a_3^+)^2\le81(a_3^-)^2$ we have 
$v_+(\eps_0^-)=u_+^l(\eps_0^-)$. Moreover, we always have 
$u_+^l(\eps_0^-)>u_0^-$. Indeed, according to~\eqref{141002}
$$
u_+^l(\eps_0^-)=u_0^++(\eps_0^--\eps_0^+)
-2\sqrt{\frac{a_3^+}{a_3^+-a_3^-}
\big((\eps_0^-)^2-(\eps_0^+)^2\big)}\;.
$$
If we plug in here expression~\eqref{140301} for $u_0^+$, 
expression~\eqref{190303} for $\eps_0^-$, and use 
expression~\eqref{140303} for $u_0^-$, we get
$$
u_+^l(\eps_0^-)-u_0^-=2\eps_0^+\sqrt\frac{a_3^+}{|a_3^-|}
\bigg(1-\sqrt{\frac{a_3^+-|a_3^-|}{a_3^++|a_3^-|}}\;\bigg)>0\,.
$$

In general, the inequality~\eqref{301101} can fail. 
We consider the foliation in such a case a bit later, 
but from now on we will assume that~\eqref{301101} is 
fulfilled. Then we have foliation~\eqref{160403} for 
$\eps\in(\eps_0^+,\eps_0^-]$, because at the moment 
$\eps=\eps_0^-$ a minor pocket on the lower boundary appears.

For $\eps>\eps_0^-$ we have two roots $u_-^{l,r}$ of the 
equation $D_-(u,\eps)=0$. If we rewrite~\eqref{130302} as
$$
2x_2D_-(x)=3(a_3^+-a_3^-)\big((x_1-u_0^-)+
(x_2+\eps_0^-)\big)^2+12a_3^-(x_2^2-(\eps_0^-)^2)\,,
$$
then it is clear that
\eq{090401}{
u_-^{l,r}=u_0^--\eps+\eps_0^-
\pm2\sqrt{\frac{|a_3^-|}{a_3^+-a_3^-}
\big(\eps^2-(\eps_0^-)^2\big)}\,.
}

Thus, we are under the scope of Theorem~\ref{120302}: 
at the point $(u_0^--\eps_0^-,-\eps_0^-)$  a minor pocket 
starts to grow on the lower boundary. The new pocket is 
a right herringbone, therefore we have to investigate the 
field~\eqref{070302}. Recall that the spine of the right 
herringbone we are interested in is shifted by $\eps$ to 
the left with respect to corresponding integral curve of 
this field.

The Jacobian matrix (compare with~(5.5) of~\cite{First}) 
of this field is
\eq{080403}{
J(x)\!=\!\begin{pmatrix}
\scriptstyle\eps(f_-''-f_+'')-
x_2(\eps-x_2)f_-'''-x_2(\eps+x_2)f_+'''&
\scriptstyle 2x_2(f_-''-f_+'')-
x_2(\eps-x_2)f_-'''+x_2(\eps+x_2)f_+'''
\\
\scriptstyle x_2(f_-''-f_+'')+
x_2(\eps-x_2)f_-'''-x_2(\eps+x_2)f_+'''&
\scriptstyle(f_-'-f_+')+(\eps-x_2)(f_-''+x_2f_-''')-
(\eps+x_2)(f_+''-x_2f_+''')
\end{pmatrix}.
}

For $\eps>\eps_0^-$ we have two stationary points 
of~\eqref{070302} on the lower boundary: 
$U_-^{l,r}=(u_-^{l,r},-\eps)$. At these points
\eq{100401}{
J(U_-)=\eps(f_-''-f_+''+2\eps f_-''')
\begin{pmatrix}
1&1-3s
\\
-1&s-1
\end{pmatrix},
}
where $s=1+\eps\kappa_-$ and 
\eq{100402}{
\kappa_-=\kappa_-^\RR=\frac{2f_-'''(u_--\eps)}
{f_+''(u_-+\eps)-f_-''(u_--\eps)-2\eps f_-'''(u_--\eps)}\,.
}
In the example under consideration we have 
\eq{100403}{
\kappa_-^l=-\kappa_-^r\qquad\text{and}\qquad\kappa_-^l
=\sqrt{\frac{|a_3^-|}{(a_3^+-a_3^-)(\eps^2-(\eps_0^-)^2)}}\,.
}

In the case of the pocket on the upper boundary we have
already considered, the integral curve starts at $U_+^r$. 
We called it $\ell_\eps$, but now we will add an index~$+$ 
to distinguish $\ell_\eps^+$ from the similar integral 
curve $\ell_\eps^-$ started at $U_-^l$. The point $U_-^l$ 
is always a saddle point, and $\ell_\eps^-$ goes along 
the eigenvector of Jacobian matrix~\eqref{080403} 
of the form~\eqref{150304} with 
$\lambda=\half\big(s-\sqrt{s^2+8s}\big)$. Therefore, 
when $\eps$ increases from $\eps_0^-$ till infinity 
the parameter $s$ decreases from $+\infty$ till some 
value greater than $1$, and the slope of $\ell_\eps^-$ 
at $U_-^l$, which is
$$
\frac1{s-1-\lambda}=\frac2{s-2+\sqrt{s^2+8s}}\,,
$$
increases from $0$ till some value less than $1$. The 
curve $\ell_\eps^+$ being shifted right by $\eps$ 
was the spine of a left herringbone and generated a pocket 
on the upper boundary. In a similar way, $\ell_\eps^-$ being 
shifted left by $\eps$ will be the spine of a right 
herringbone and will form a pocket on the lower boundary.

The character of the second stationary point $U_-^r$ depends 
on~$\eps$. We know that for $\eps$ sufficiently close to 
$\eps_0^-$ (when $s<-8$) the Jacobian matrix has two negative 
eigenvalues. This situation occurs for
\eq{200401}{
\eps_0^-<\eps<\eps_1^-\df9\eps_0^-
\sqrt{\frac{a_3^+-a_3^-}{81a_3^+-80a_3^-}}.
}
For such $\eps$ we have $s<-8$, the point $U_-^r$ is 
a node, and $\ell_\eps^-$ intersects the lower boundary 
at the point $U_-^r$. For $\eps=\eps_1^-$ the eigenvalues 
of the Jacobian matrix are equal and $J(U_-^r)$ is 
a Jordan cell, i.\,e., $U_-^r$ is an improper node and 
$\ell_\eps^-$ still passes through $U_-^r$. The picture 
changes for $\eps>\eps_1^-$, when $s\in(-8,0)$ and $U_-^r$ 
is a spiral point. Then our integral curve cannot cross 
the lower boundary not only on the interval $(u_-^l,u_-^r)$, 
but at the point $U_-^r$ as well, i.\,e., for $\eps>\eps_1$ 
we have some point $(v_-,-\eps)$ with $v_-=v_-(\eps)>u_-^r$, 
where our integral curve cross the lower boundary at the 
second time. Thus, we have found two integral curves: an 
integral curve $\ell_\eps^+$ of the field~(5.3) that connects 
two points of the upper boundary with the first coordinates 
$v_+$ ($v_+\le u_+^l$) and $u_+^r$, and another integral 
curve $\ell_\eps^-$ of the field~\eqref{070302} that connects 
two points of the lower boundary with the first coordinates 
$u_-^l$ and $v_-$ ($v_-\ge u_-^r$). 

Recall that we are currently assuming that at the moment 
a pocket appears on the lower boundary, the left end of 
the pocket on the upper boundary has not moved too far 
to the left, specifically, inequality~\eqref{301101} holds. 
Then, at the moment $\eps_0^-$, a pocket forms on the lower boundary with endpoints $u_-^l-\eps$ and $v_--\eps$. We know that as $\eps$ increases, the curve $\ell_\eps^+$ descends, and the value $v_+$ decreases. Similarly, the curve $\ell_\eps^-$ rises, and $v_-$ increases. Therefore, a moment will come, which we denote as $\eps_2$, when $v_-=v_+$. Then, for $\eps\in(\eps_0^-,\eps_2]$, we have the foliation
\eq{160404}{
\Omega_\eps=\Om{R}(-\infty,u_-^l)\cup\Om{E$-$}(u_-^l,v_-)\cup\Om{R}(v_-,v_+)
\cup\Om{W$+$}(v_+,u_+^r)\cup\Om{R}(u_+^r,+\infty)\,.
}

At the moment $\eps=\eps_2$, the region separating the pockets with the simple right foliation $\Om{R}(v_-,v_+)$ collapses into a single line. To describe what happens next, we will need a new element in our collection of elementary foliations, which we will introduce in the following section.

\section{Rectangle}
\label{211102}
We will need a new notation for the areas foliated by the 
herringbones. Previously, when all herringbones started and 
ended on the boundary, the corresponding area could 
conveniently be measured by the segment cut out by the 
herringbone from the midline of the strip. Now we need 
herringbones whose spines connect arbitrary two points in 
the strip $\Oe$. If the spine goes from point $A$ to point 
$C$, we will denote the corresponding area by 
$\Om{HB}(A,C)$\footnote{The letters HB come from the word 
HerringBone.}. By analogy with complex numbers, we will 
denote by $\bar x$ the point $(x_1,-x_2)$, and also 
$x+a\df(x_1+a,x_2)$.

Using this notation, we state the main proposition of this 
section, which shows how two oppositely oriented herringbones 
can be joined within a common foliation.

\begin{Prop}
\label{091101}
Let there exist a point $C=(C_1,C_2)\in\Oe$\textup, 
such that we can construct\textup:
\begin{itemize}
\item a left herringbone $\Om{HB}(C+\eps,C^r)$\textup, and
\item a right herringbone $\Om{HB}(\bar C-\eps,C^l)$.
\end{itemize}
Then\textup, if we define in the rectangle between them 
with vertices at the points $(C_1+C_2,\eps)$\textup, 
$C+\eps$\textup, $(C_1-C_2,-\eps)$\textup, and $\bar C-\eps$ 
a bilinear function \textup(which is uniquely determined 
by its boundary values\textup)\textup, we obtain 
a diagonally concave $C^1$-smooth function.
\end{Prop}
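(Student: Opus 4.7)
The plan is to work in the rotated coordinates $y_1 = x_1 + x_2$, $y_2 = x_1 - x_2$, in which the strip $\Oe$ becomes $|y_1 - y_2| \le 2\eps$ and every extremal (slope $\pm 1$ in $(x_1, x_2)$) is parallel to one of the two axes. A direct check of the four vertex coordinates shows that the rectangle is axis-aligned in these coordinates: two opposite sides lie on lines $y_1 = \const$ and the other two on lines $y_2 = \const$. Crucially, the two sides meeting at $C+\eps$ are precisely the two extremals of the left herringbone emanating from that spine endpoint, and likewise for the right herringbone at $\bar C-\eps$; the remaining two vertices $(C_1\pm C_2, \pm\eps)$ lie on the upper and lower boundaries of the strip.

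I would then write $M = \alpha_0 + \alpha_1 y_1 + \alpha_2 y_2 + \alpha_3 y_1 y_2$. Since one of $y_1, y_2$ is constant on each side, $M$ is linear on every side and is therefore uniquely determined by its four vertex values. The Bellman functions of the two herringbones are linear on their extremals, hence linear on each side of the rectangle, so matching $M$ to the herringbone boundary data reduces to matching the four vertex values: at $C+\eps$ and $\bar C-\eps$ they are dictated by the two herringbones, while at the two boundary vertices both herringbones' extremals extend to the same strip-boundary point and yield the same value by the boundary condition. Diagonal concavity is then immediate: from $M_{y_1 y_1} = M_{y_2 y_2} = 0$ and the fact that these are precisely the second derivatives of $M$ along the diagonal directions $(1,\pm 1)$ in $(x_1,x_2)$, both diagonal second derivatives of $M$ vanish identically, so diagonal concavity holds with equality.

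The main obstacle is establishing $C^1$-smoothness across the four sides. The tangential derivative matches automatically, since both $M$ and the herringbone Bellman function are linear along each side and agree at its endpoints. For the transverse derivative, on a side $y_1 = \const$ one has $M_{y_1} = \alpha_1 + \alpha_3 y_2$, linear in the tangential coordinate $y_2$; I must verify that the transverse derivative of the adjacent herringbone Bellman function along the shared extremal has exactly this linear behavior. The cleanest route, I expect, is to match the full tangent plane of $M$ with that of each herringbone at the two spine endpoints $C+\eps$ and $\bar C-\eps$. The tangent plane at such a corner is intrinsically determined by the two emanating extremals together with the extremal property of the herringbone, and the bilinear function in $(y_1,y_2)$ is the canonical $C^1$-extension of the corner data. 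Once the tangent planes agree at these two corners, the linearity of $M_{y_1}$ and $M_{y_2}$ in the respective tangential variables propagates the gradient match along the whole boundary of the rectangle.
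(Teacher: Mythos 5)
Your setup is the same as the paper's: your bilinear function in $(y_1,y_2)=(x_1+x_2,\,x_1-x_2)$ is exactly the paper's ansatz $a(x_1^2-x_2^2)+bx_1+cx_2+d$, it is pinned down by the four vertex values, diagonal concavity of that piece is trivially degenerate, and the whole content of the proposition is the $C^1$ matching across the four sides, which (as you note) reduces to comparing two functions that are affine along each side. The gap is that this decisive step is not actually carried out. Your proposed shortcut --- match the full tangent planes of $M$ and of the herringbone at the two spine endpoints $C+\eps$ and $\bar C-\eps$, then let ``linearity propagate the gradient match along the whole boundary'' --- is both unsubstantiated and logically insufficient as stated. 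It is unsubstantiated because the herringbone's tangent plane at its spine endpoint is not ``intrinsically'' forced to agree with that of the bilinear function determined by the four vertex values: the herringbone gradient is given by the explicit formulas of the first paper ((4.14), with $N_\pm$ from (4.11) and (4.5)), the spine-endpoint values $A^l,A^r$ must be expressed through the boundary data via (4.16) and (4.44), and the agreement is precisely the identity the paper verifies in \eqref{181102}, equivalently $R=4a(C_1+\eps)+2b$ in \eqref{181103}. Calling the bilinear function ``the canonical $C^1$-extension of the corner data'' asserts the conclusion rather than proving it.

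It is insufficient because each side of the rectangle contains only \emph{one} spine endpoint, and two functions affine along a segment that agree at a single point need not coincide. To propagate you need a second condition per side: either agreement of the transverse derivative at the strip-boundary vertex --- the paper's check $2a(C_1+C_2)+b=f_+'$ in \eqref{181101}, which is \emph{not} automatic from value matching (the rectangle meets the boundary only at that one point) but follows from the coefficient formulas \eqref{181104}--\eqref{181105} --- or, equivalently, the slope identity $N_+=2a$ along the shared extremal. Your text explicitly defers these verifications (``I must verify\dots'', ``I expect\dots''), so as written the $C^1$ claim is not established; filling the gap brings you back to the paper's computation of the coefficients \eqref{121103} and the two endpoint checks \eqref{181101}--\eqref{181103}.
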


\begin{proof}
In a general case a bilinear function has the following 
form:
\eq{121101}{
a(x_1^2-x_2^2)+bx_1+cx_2+d\,.
}
The coefficient will be found using the value of the function 
at four vertices:
\eq{121102}{
\begin{aligned}
f_+(C_1+C_2)=B(C_1+C_2,\eps)&=
a(C_1+C_2)^2-a\eps^2+b(C_1+C_2)+c\eps+d;
\\
A^l(C_1+\eps)\df B(C_1+\eps,C_2)&=
a(C_1+\eps)^2-aC_2^2+b(C_1+\eps)+cC_2+d;
\\
f_-(C_1-C_2)=B(C_1-C_2,-\eps)&=
a(C_1-C_2)^2-a\eps^2+b(C_1-C_2)-c\eps+d;
\\
A^r(C_1-\eps)\df B(C_1-\eps,-C_2)&=
a(C_1-\eps)^2-aC_2^2+b(C_1-\eps)-cC_2+d.
\end{aligned}
}
Solving this system we get
\eq{121103}{
\begin{aligned}
a&=\frac{A^r+A^l-f_+-f_-}{4(\eps^2-C_2^2)};
\\
b&=\frac{(C_1-C_2)f_++(C_1+C_2)f_-
-(C_1-\eps)A^l-(C_1+\eps)A^r}{2(\eps^2-C_2^2)};
\\
c&=\frac{\eps(f_+-f_-)+C_2(A^r-A^l)}{2(\eps^2-C_2^2)};
\\
d&=\frac{[\eps^2-(C_1-C_2)^2]f_+
+[\eps^2-(C_1+C_2)^2]f_-}{4(\eps^2-C_2^2)}+
\\
&\qquad\qquad\qquad+\frac{[(C_1-\eps)^2-C_2^2]A^l
+[(C_1+\eps)^2-C_2^2]A^r}{4(\eps^2-C_2^2)}.
\end{aligned}
}

The formulas will be a bit simpler, if we express $A^l$ and 
$A^r$ in terms of the boundary values $f_+$ и $f_-$ 
using~(4.16) for $A=A^l$ and~(4.44) for $A=A^r$:
$$
\begin{aligned}
A^l&=A(C_1+\eps,C_2)=
\\
&=\frac{\eps^2-C_2^2}{2\eps C_2}\Big[(\eps+C_2)f_+'-
(\eps-C_2)f_-'\Big]
+\frac{(\eps+C_2)f_++(\eps-C_2)f_-}{2\eps}\,;
\\
A^r&=A(C_1-\eps,-C_2)=
\\
&=\frac{\eps^2-C_2^2}{-2\eps C_2}\Big[(\eps+C_2)f_-'-
(\eps-C_2)f_+'\Big]
+\frac{(\eps-C_2)f_++(\eps+C_2)f_-}{2\eps}\,.
\end{aligned}
$$
Recall that the values of $f_+$ and $f_-$ are taken at the
vertices of the rectangle on the boundary: $f_+=f_+(C_1+C_2)$ 
and $f_-=f_-(C_1-C_2)$. After algebraic transformations
we get:
\begin{align}
a&=\frac{f_+'-f_-'}{4C_2};\label{181104}
\\
b&=\frac{(C_1+C_2)f_-'-(C_1-C_2)f_+'}{2C_2};\label{181105}
\\
c&=\frac{f_+-f_--C_2(f_+'+f_-')}{2\eps};
\\
d&=\frac{f_+\!\!+\!f_-\!}2+\frac{(C_1^2\!+
\!\eps^2\!\!-\!C_2^2\!-\!2C_1C_2)f_+'\!-\!
(C_1^2\!+\!\eps^2\!\!-\!C_2^2\!+\!2C_1C_2)f_-'}{4C_2}.
\end{align}

Due to the symmetry, it is sufficient to check continuity 
of the gradient on one arbitrary side of the rectangle.
Moreover, since the derivative along the side of the rectangle
is continuous, it is sufficient to check continuity along
any transversal direction. We will check continuity of
$B_{x_1}$ on the side connecting $(C_1+C_2,\eps)$ and $C+\eps$.

Formula~(4.14) supplies us with the derivative on a 
herringbone. We need the upper half of the formula that
gives us a linear function:
$$
B_{x_1}(x)=(\eps-x_2)N_+(u)+f_+'\,.
$$
The derivative in the rectangle
$$
B_{x_1}(x)=2ax_1+b
$$
is a linear function as well, therefore, for their coincidence
it is sufficient to check that they are equal at the endpoints 
of the interval:
\eq{181101}{
2a(C_1+C_2)+b=f_+'\qquad\text{at}\quad x=(C_1+C_2,\eps)
}
and
\eq{181102}{
2a(C_1+\eps)+b=(\eps-C_2)N_++f_+'\qquad\text{at}\quad 
x=(C_1+\eps,C_2).
}
The first equality immediately follows from~\eqref{181104} 
and~\eqref{181105}. To check the second equality we need
the definition of $N_+$ 
{\bf(}see~(4.11) and~(4.5){\bf)}. At the point
$(u,T)=(C_1+\eps,C_2)$ we get:
\eq{181106}{
N_+=\frac{\half R-f_+'}{\eps-C_2},
}
where
\eq{181107}{
R=R_++R_-,\qquad R_-=\frac{A^l-f_-}{\eps+C_2},\qquad 
R_+=\frac{A^l-f_+}{\eps-C_2}\,.
}
Thus, equality~\eqref{181102} can be rewritten in the form
\eq{181103}{
R=4a(C_1+\eps)+2b\,,
}
what is easier to verify without using explicit expressions 
for $a$ and $b$, but directly from equations~\eqref{121102}:
$$
\begin{aligned}
R_-=\frac{A^l-f_-}{\eps+C_2}=2a(C_1+\eps-C_2)+b+c;
\\
R_+=\frac{A^l-f_+}{\eps-C_2}=2a(C_1+\eps+C_2)+b-c,
\end{aligned}
$$
whence immediately~\eqref{181103} follows.
\end{proof}

The function constructed in this Proposition will be denoted 
by $\Om{Rect}(C)$.

\section{Examples. Polynomials of third degree: 
\\the case of a rectangle with two herringbones}

Recall that we were tracking the integral curves $\ell_\eps^+$ 
and $\ell_\eps^-$, which gave us the spines of the 
herringbones: the first, when shifted right by $\eps$, 
produced a left herringbone, while the second, when shifted 
left by $\eps$, produced a right herringbone. 
For convenience, instead of the curve $\ell_\eps^-$, 
we will track its reflection $\bar\ell_\eps^-$ across 
the $x_1$-axis. 

We are not interested in the behavior of 
these integral curves outside the strip $\Oe$, but inside 
the strip, they were disjoint for $\eps<\eps_2$. 
At $\eps = \eps_2$, the curves $\bar\ell_\eps^-$ and 
$\ell_\eps^+$ intersected at a point on the upper boundary. 
Let us denote their intersection point by $C = C(\eps)$. 
Then, $C_1(\eps_2) = v_+(\eps_2) = v_-(\eps_2)$ and 
$C_2(\eps_2) = \eps_2$.  

Both curves, $\ell_\eps^+$ and $\bar\ell_\eps^-$, descend 
as $\eps$ increases, so their intersection point $C$ also 
descends, i.\,e., the function $C_2(\eps)$ is strictly 
monotonically decreasing. Therefore, for all $\eps>\eps_2$, 
both the segment of $\ell_\eps^+$ between points $C$ and 
$U_+^r$ and the segment of $\bar\ell_\eps^-$ between points 
$\bar U_-^l$ and $C$ lie in the domain where $D_+ > 0$ and 
$D_- > 0$.  

Thus, the left herringbone constructed from $\ell_\eps^+$ 
and the right herringbone constructed from $\ell_\eps^-$ 
generate diagonally concave functions. 
{\bf(}Recall that $D_\pm\ut{R}(u)=D_\pm(u-\eps,-T\ti{R}(u))$, 
so the condition for the diagonal concavity of the 
corresponding function ($D_\pm\ut{R}\geq0$) can be 
rewritten as the condition $D_\pm\geq0$ on the curve 
$\bar\ell_\eps^-$.{\bf)}  

Hence, the constructed point $C$ satisfies 
Proposition~\ref{091101}, allowing us to define the foliation  
\eq{211101}{
\begin{aligned}
\Omega_\eps = \Om{R}(-\infty, u_-^l) &\cup 
\Om{HB}(\bar C-\eps, U_-^l-\eps)\cup\Om{Rect}(C)\cup 
\\
&\cup\Om{Rect}(C)\cup\Om{HB}(C+\eps,U_+^r+\eps)\cup
\Om{R}(u_+^r, +\infty),
\end{aligned}
}  
which generates the desired minimal diagonally concave 
function for all $\eps \geq \eps_2$.  

Recall that so far, we have assumed that 
inequality~\eqref{301101} holds. Now, we consider the case 
where this inequality does not hold. In this scenario, 
the foliation~\eqref{160403} persists up to $\eps_2$, 
which is now determined by the equality 
$v_+(\eps_2) = u_-^l(\eps_2)$. After this moment, the same 
foliation arises as in the case where condition~\eqref{301101} 
holds: two oppositely oriented herringbones separated by 
a rectangle.  

Let us explain this in more detail. We will track the curve 
$\bar\ell_\eps^-$ and verify that it lies strictly above 
$\ell_\eps^+$ for $\eps<\eps_2$. As we know, the curve 
$\bar\ell_\eps^-$ appears on the upper boundary at 
$\eps=\eps_0^-$ as the point $(u_0^-,\eps_0^-)$. 
Recall that we are now considering the case where 
inequality~\eqref{301101} does not hold. In the borderline 
case when $v_+(\eps_0^-) = u_0^-$, we set $\eps_2=\eps_0^-$. 
If $v_+(\eps_0^-)<u_0^-$, then the point $(u_0^-,\eps_0^-)$ 
lies strictly above the curve $\ell_\eps^+$, since the right 
endpoint $U^r_+$ of $\ell_\eps^+$ always lies to the right of 
this point {\bf(}cf.~formulas~\eqref{140301} 
and~\eqref{140303}{\bf)}.

Thus, let $\eps_2$ be the smallest value of $\eps$ for which 
the curves $\bar\ell_\eps^-$ and $\ell_\eps^+$ have a common 
point in the strip $\Oe$. We will show that this point must 
lie on the boundary of the strip. First, suppose this point 
lies strictly inside the strip. Then it would be a point of 
tangency between $\bar\ell_\eps^-$ and $\ell_\eps^+$. Let 
us equate the slopes of $\bar\ell_\eps^-$ and $\ell_\eps^+$ 
{\bf(}see~\eqref{070302} and~(5.2){\bf)}:
$$
-\frac{(\eps-x_2)D_+-(\eps+x_2)D_-}
{(\eps-x_2)D_++(\eps+x_2)D_-}=
\frac{(\eps-x_2)D_--(\eps+x_2)D_+}
{(\eps-x_2)D_-+(\eps+x_2)D_+}\,,
$$
which reduces to $x_2D_+D_-=0$. This cannot be fulfilled 
since our curves lie in the upper half of the strip ($x_2>0$) 
and we consider them in the domain where $D_+>0$ and $D_->0$.

Therefore, the curves $\bar\ell_{\eps_2}^-$ and 
$\ell_{\eps_2}^+$ can intersect only at points on 
the upper boundary $x_2=\eps_2$. This cannot be at the right 
endpoints of the curves, i.\,e., not at $U^r_+$. Indeed, 
the slope of $\bar\ell_{\eps_2}^-$ at this point equals 1, 
while the slope of $\ell_{\eps_2}^+$ is strictly less than 1. 
Hence, near $U^r_+$, the curve $\bar\ell_{\eps_2}^-$ would 
have to lie below $\ell_{\eps_2}^+$, which is impossible 
because by definition of $\eps_2$, for all $\eps<\eps_2$ 
the curve $\bar\ell_{\eps}^-$ lies strictly above 
$\ell_{\eps}^+$. Consequently, these curves must intersect 
at $\bar U^l_-=(v_+(\eps_2),\eps_2)$. Thus, as claimed, 
the parameter value $\eps_2$ is determined by the equation 
$v_+(\eps)=u^l_-(\eps)$. 

The solution to this equation is clearly unique (if it exists), 
since, as noted earlier, both curves $\bar\ell_{\eps}^-$ and 
$\ell_{\eps}^+$ descend as $\eps$ increases, and the second 
coordinate $C_2(\eps)$ of their intersection point $C=C(\eps)$ 
is strictly decreasing. If the equation $v_+(\eps)=u^l_-(\eps)$ 
has no solution, we may formally set $\eps_2=\infty$, in which 
case the foliation~\eqref{160403} persists for all 
$\eps > \eps_0^+$.

The foliation $\Om{Rect}(C)$ constructed in Section~\ref{211102} allows us to conjugate not only two pockets but also two oppositely directed fissures, as in the case described in Fig.~15 of \cite{First}.

Recall that in the case of a positive discriminant of the quadratic polynomial $f_+'-f_-'$ and opposite signs of the leading coefficients $a_3^\pm$ of the polynomials $f_\pm$ (we chose $a_3^+>0$ and $a_3^-<0$), the foliation for small $\eps$ is described by the formula
\begin{equation}\label{041201}
\begin{aligned}
\Oe=&\;\Om{R}(-\infty,u_{-1})\cup\Omega_\eps^{\scriptscriptstyle\mathrm{SW}}(u_{-1},v_{+1})\cup
\\
&\cup\Om{L}(v_{+1},v_{-2})\cup\Omega_\eps^{\scriptscriptstyle\mathrm{NE}}(v_{-2},u_{+2})\cup\Om{R}(u_{+2},+\infty)\,,
\end{aligned}
\end{equation}
This foliation persists as long as $v_{+1}<v_{-2}$. At the moment $\eps=\eps_2$, determined by the equality $v_{+1}(\eps_2)=v_{-2}(\eps_2)$, the domain $\Om{L}(v_{+1},v_{-2})$ collapses into a single left extremal line, after which a rectangle $\Om{Rect}(C)$ appears in its place. The point $C$ now rises from the lower boundary and is determined as follows. Let $\ell_\eps^+$ be the integral curve of field (5.3) starting from the point $(u_{+2},\eps)$ with positive slope (i.\,e., the curve generating the right fissure), and let $\ell_\eps^-$ be the integral curve of field~\eqref{070302} starting from the point $(u_{1-},-\eps)$ with positive slope (i.\,e., the curve generating the left fissure). Then the point $C=C(\eps)$ is the intersection point of the curves $\ell_\eps^+$ and $\bar\ell_\eps^-$.

Thus, we only need to verify that the functions $v_{+1}$ 
and $v_{-2}$ do not tend to finite limits. We know that 
the first function is monotonically increasing, while 
the second is monotonically decreasing. Moreover, 
the absolute values of the derivatives of these functions 
are not less than one. Indeed, consider Fig.~\ref{031201}.
\begin{figure}[h]
    \centering
    \includegraphics[scale = 0.6]{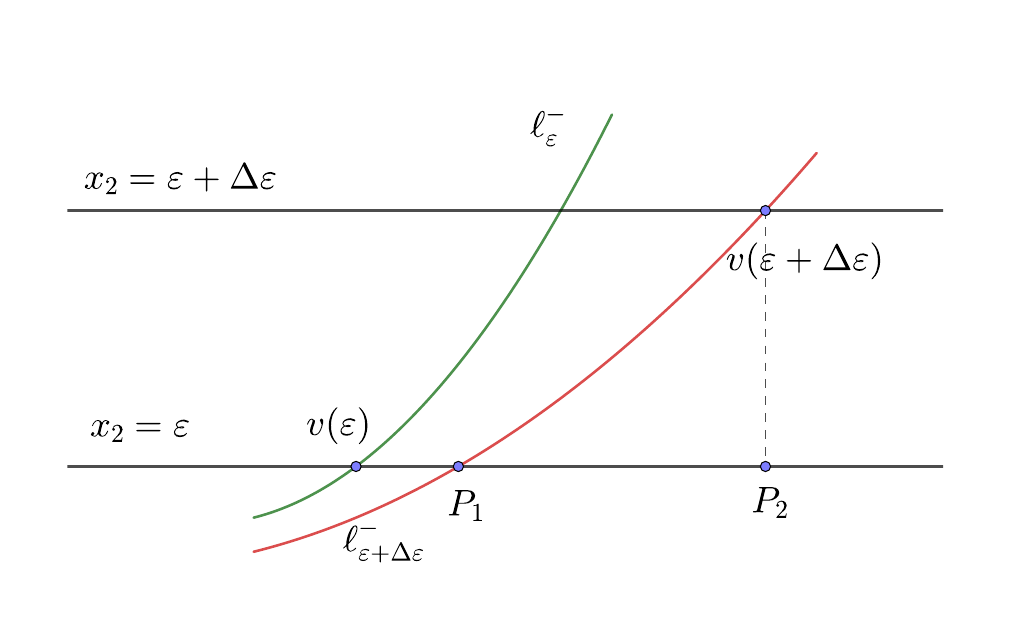}
    \caption{The curves $\ell_\eps^-$ and 
    $\ell_{\eps+\Delta\eps}^-$}
    \label{031201}
\end{figure}

\noindent Since the slope of any integral curve $\ell_\eps$ 
is strictly less than one inside the strip $\Oe$, the distance 
between points $P_1$ and $P_2$ is strictly bigger than 
$\Delta\eps$, and therefore $v(\eps+\Delta\eps)-
v(\eps)>\Delta\eps$, meaning $v'(\eps)\le1$. 
This reasoning applies not only to the case $v=v_{+1}$, 
where the arrangement of curves is as shown in the figure, 
but also to the symmetric case $v=v_{-2}$.

Thus, at $\eps=\eps_2$ a foliation restructuring occurs, 
and for all larger $\eps$ we obtain the foliation
\begin{equation}\label{041202}
\begin{aligned}
\Oe=\Om{R}(-\infty,u_{1-})&
\cup\Om{HB}(\bar C-\eps,(u_{1-}\!\!-\eps,-\eps))\:\cup
\\
\cup\:\Om{Rect}(C)&\cup\Om{HB}(C+\eps,(u_{2+}\!+\eps,\eps))
\cup\Om{R}(u_{2+},+\infty)\,,
\end{aligned}
\end{equation}
or in slightly different notation
\begin{equation}\label{041203}
\begin{aligned}
\Omega_\eps=\Om{R}(-\infty,u_-^l)&
\cup\Om{HB}(\bar C-\eps,U_-^l-\eps)\:\cup
\\
\cup\:\Om{Rect}(C)&\cup\Om{HB}(C+\eps,U_+^r+\eps)
\cup\Om{R}(u_+^r,+\infty)\,.
\end{aligned}
\end{equation}

Concluding this section, we describe one more case where 
a rectangle connecting two oppositely directed herringbones 
appears. This is the previously unmentioned case of a zero 
discriminant of the expression $f'_+-f'_-$. Until now we have 
assumed this expression has no multiple roots. We have just 
considered the case where $f'_+-f'_-$ has two roots with 
$a_3^+>0$ and $a_3^-<0$. If we bring the roots closer while 
keeping the leading coefficients of the polynomials, 
the critical strip width $\eps_2$ up to which 
foliation~\eqref{041201} holds will tend to zero, and in 
the limiting case of a multiple root there will be no 
fissures for any $\eps$. We will now show that in this case, 
foliation~\eqref{041203} holds for all $\eps$.

Thus, we consider the case when 
$$
(a_2^+-a_2^-)^2=3(a_3^+-a_3^-)(a_1^+-a_1^-), \qquad 
a_3^+>0,\quad\text{and}\quad a_3^-<0.
$$
Let's begin by examining the zeros of functions $D_+$ and 
$D_-$. In our case, formulas~\eqref{130301} and~\eqref{130302} 
can be written as follows:
\eq{080201}{
2x_2D_+(x)=3(a_3^+-a_3^-)\big(x_1-u_0-(1+2k_+)x_2\big)
\big(x_1-u_0-(1-2k_+)x_2\big);
}
\eq{080202}{
2x_2D_-(x)=3(a_3^+-a_3^-)\big(x_1-u_0+(1+2k_-)x_2\big)
\big(x_1-u_0+(1-2k_-)x_2\big),
}
where
\eq{080203}{
u_0=-\frac{a_2^+-a_2^-}{3(a_3^+-a_3^-)}\,,\quad 
k_+=\sqrt{\frac{a_3^+}{a_3^+-a_3^-}}\,,
\quad k_-=\sqrt{\frac{|a_3^-|}{a_3^+-a_3^-}}\,.
}

Thus, in this case the curves $D_+=0$ and $D_-=0$ are 
straight lines passing through the point $(u_0,0)$. 
As when constructing fissures, we'll be interested in 
the intersection points of lines $D_\pm=0$ with the boundary 
line $x_2=\eps$. Moreover, from each pair of points 
we'll need the right point for $D_+$
\eq{090201}{
(u_+,\eps),\qquad\text{where}\qquad u_+=u_0+(1+2k_+)\eps
}
and the left point for $D_-$
\eq{090202}{
(u_-,\eps),\qquad\text{where}\qquad u_-=u_0-(1+2k_-)\eps\,.
}

The point $(u_+,\eps)$ is a saddle point of field (5.3), 
while the point $(u_-,-\eps)$ is a saddle point of 
field~\eqref{070302}. The Jacobian matrix at $(u_+,\eps)$ 
equals (see (5.17))
\eq{120201}{
J(u_+,\eps)=12(a_3^+-a_3^-)k_+\eps^2
\begin{pmatrix}
1&-2-3k_+
\\
-1&k_+
\end{pmatrix}.
}

We're interested in the eigenvector of this matrix with 
positive slope. It can be verified that this will be the vector
\eq{120202}{
\begin{pmatrix}
k_+-1+\sqrt{(1+k_+)(9+k_+)}
\\
2
\end{pmatrix},
}
whose slope is greater than that of the line $x_1=(1+2k_+)x_2$ 
(on which, recall, $D_+=0$). Indeed, since $k_+>0$, it's easy 
to verify that
$$
\frac2{k_+-1+\sqrt{(1+k_+)(9+k_+)}}>\frac1{1+2k_+}\,.
$$

Thus, the integral curve $\ell_\eps^+$ of field~(5.3), 
passing through the point $(u_+,\eps)$ along 
vector~\eqref{120202}, enters the strip $\Oe$ below 
the line $x_1=u_0+(1+2k_+)x_2$. By the same reasoning used 
earlier, the curve $\ell_\eps^+$ cannot intersect this line 
at any point in the upper half of the strip, since at 
any point on this line the integral curves have a slope 
equal 1. This means that from the region below the line, 
integral curves intersect the line from left to right, 
while the right portion of curve~$\ell_\eps^+$, as we have 
established, lies below the line $x_1=u_0+(1+2k_+)x_2$. 
Therefore, curve~$\ell_\eps^+$ passes through the point 
$(u_0,0)$, since it cannot intersect the $x_1$-axis at any 
other point. The crucial conclusion for us is that in 
the upper half of strip $\Oe$, curve~$\ell_\eps^+$ lies 
to the right of the line $x_1=u_0+(1+2k_+)x_2$, i.\,e., 
in the region where $D_+>0$ and $D_->0$. Consequently, 
after shifting right by~$\eps$, it can serve as the spine 
of a left herringbone.

The construction of the right herringbone is completely 
symmetric, i.\,e., the construction of the integral curve 
$\ell_\eps^-$ of vector field~\eqref{070302}, which intersects 
the lower boundary at the saddle point $(u_-,-\eps)$ at a 
positive angle. The Jacobian matrix of field~\eqref{070302} 
has the form~\eqref{080403}. Evaluating the matrix at point 
$(u_-,-\eps)$, we obtain
\eq{150201}{
J(u_-,-\eps)=-12(a_3^+-a_3^-)k_-\eps^2
\begin{pmatrix}
1&-2-3k_-
\\
-1&k_-
\end{pmatrix}.
}
The eigenvector along which we launch curve $\ell_\eps^-$ 
has the form
\eq{150202}{
\begin{pmatrix}
k_--1+\sqrt{(1+k_-)(9+k_-)}
\\
2
\end{pmatrix},
}
Since
$$
\frac2{k_--1+\sqrt{(1+k_-)(9+k_-)}}>\frac1{1+2k_-}\,,
$$
the conjugate curve $\bar\ell_\eps^-$ starts from point 
$(u_-,\eps)$ below the line $x_1=u_0-(1+2k_-)x_2$, and 
thus remains below this line all the way to the point 
$(u_0,0)$, i.\,e., in the region where $D_+>0$ and $D_->0$. 
Therefore, after shifting left by~$\eps$, curve $\ell_\eps^-$ 
can serve as the spine of the right herringbone.

Thus, for any $\eps$, we are in the conditions of 
Proposition~\ref{091101} with point $C=(u_0,0)$, thereby 
constructing the desired foliation.

\section*{Acknowledgments}
The author is grateful to D.~M.~Stolyarov for conducting 
computer experiments that helped understand certain types 
of foliations, and to P.~B.~Zatitskii whose critical 
remarks helped improve this text.

\vskip40pt

\section*{Formulas and Propositions from~\cite{First}
used in this paper}

$$
B(x_1,x_2)=\frac{\eps+x_2}{2\eps}f_+(x_1-x_2+\eps)+
\frac{\eps-x_2}{2\eps}f_-(x_1-x_2-\eps)\,.
\leqno(2.1)
$$

$$
\begin{aligned}
f_+'(u+\eps)-f_-'(u-\eps)-2\eps f_+''(u+\eps)&\ge0\,;
\\
f_+'(u+\eps)-f_-'(u-\eps)-2\eps f_-''(u-\eps)&\ge0\,.
\end{aligned}
\leqno(2.2)
$$

\bigskip

{\bf Proposition 2.1.}
{\it The function $B$ defined by~$(2.1)$ is a Bellman
candidate on the domain $\Om{R}(u_1,u_2)$ if and only if 
conditions~$(2.2)$ are fulfilled for all
$u,$ $u\in(u_1,u_2)$.
}

\bigskip

$$
R_-\df\frac{A-f_-}{\eps+T},\qquad R_+\df
\frac{A-f_+}{\eps-T},\quad\text{and}\quad R\df R_-+R_+.
\leqno(4.5)
$$

$$
N_+\df\frac{\half R-f_+'}{\eps-T}\quad\text{and}\quad 
N_-\df\frac{\half R-f_-'}{\eps+T}\,.
\leqno(4.11)
$$

$$
B_{x_1}(x)=
\begin{cases}
(\eps-x_2)N_++f_+', 
&\text{if }x_2\ge T(u);
\\
(\eps+x_2)N_-+f_-',
&\text{if }x_2\le T(u).
\end{cases}
\leqno(4.14)
$$

$$
\begin{cases}
\dot x_1=(\eps-x_2)\big[\half(f_+'-f_-')-x_2f_-''\big]+
(\eps+x_2)\big[\half(f_+'-f_-')-x_2f_+''\big],
\\
\dot x_2=(\eps-x_2)\big[\half(f_+'-f_-')-x_2f_-''\big]-
(\eps+x_2)\big[\half(f_+'-f_-')-x_2f_+''\big],
\end{cases}
\leqno(5.2)
$$

\bigskip

$$
\begin{cases}
\dot x_1=\eps(f_+'-f_-')-x_2(\eps-x_2)f_-''-x_2(\eps+x_2)f_+'',
\\
\dot x_2=x_2(f_-'-f_+')-x_2(\eps-x_2)f_-''+x_2(\eps+x_2)f_+''.
\end{cases}
\leqno(5.3)
$$

\bigskip

{\bf Proposition 5.4.}
{\it The slope of an integral curve of field~$(5.2)$ at 
a point $x$ is strictly increasing with $\eps$ if 
$x_2D_+(x)D_-(x)>0$ and it is strictly decreasing if 
$x_2D_+(x)D_-(x)<0$.
}

\bigskip

$$
J(x)\!=\!
\begin{pmatrix}
\scriptstyle\eps(f_+''-f_-'')-x_2(\eps-x_2)f_-'''-
x_2(\eps+x_2)f_+'''&
\scriptstyle 2x_2(f_-''-f_+'')+x_2(\eps-x_2)f_-'''-
x_2(\eps+x_2)f_+'''
\\
\scriptstyle x_2(f_-''-f_+'')-x_2(\eps-x_2)f_-
'''+x_2(\eps+x_2)f_+'''&
\scriptstyle(f_-'-f_+')-(\eps-x_2)(f_-''-x_2f_-''')+
(\eps+x_2)(f_+''+x_2f_+''')
\end{pmatrix}.
\leqno(5.5)
$$

$$
J(u_+,\eps)=\eps\big(f_+''(u_++\eps)-f_-''(u_+-\eps)-
2\eps f_+'''(u_++\eps)\big)
\begin{pmatrix}
\phantom{i}1&-2-3\eps\kappa_+
\\
-1&\eps\kappa_+
\end{pmatrix},
\leqno(5.17)
$$

$$
\kappa_+=\frac{2f_+'''(u_++\eps)}{f_+''(u_++\eps)-
f_-''(u_+-\eps)-2\eps f_+'''(u_++\eps)}\,.
\leqno(5.18)
$$

\bigskip

St. Petersburg State University, 

St. Petersburg, Russia

vasyunin@pdmi.ras.ru
\end{document}